\documentclass[12pt]{amsart}
\usepackage{amsthm,amssymb,amsmath,hyperref, cite}
\usepackage{color, mdframed, tikz}
\usepackage{graphicx,enumerate}
\pdfoutput=1
\usepackage[margin=1in]{geometry}
\allowdisplaybreaks
\usetikzlibrary{decorations.pathreplacing,calc}

\makeatletter
\def\l@section{\@tocline{1}{12pt plus2pt}{0pt}{}{\bfseries}}
\def\l@subsection{\@tocline{2}{0pt}{2pc}{2pc}{}}
\makeatother
\makeatletter
\def\subsection{\@startsection{subsection}{2}{\z@}%
	{-3.25ex\@plus -1ex \@minus -.2ex}%
	{1.5ex \@plus .2ex}%
	{\normalfont\bfseries\boldmath}}
\def\subsubsection{\@startsection{subsubsection}{3}%
	\z@{.5\linespacing\@plus.7\linespacing}{-.5em}%
	{\normalfont\bfseries\boldmath}}
\renewcommand\paragraph{\@startsection{paragraph}{4}{\z@}%
	{3.25ex \@plus1ex \@minus.2ex}%
	{-1em}%
	{\normalfont\normalsize\bfseries}}

    \makeatletter
\newcommand{\widecheck}[1]{\mathpalette\widecheck@{#1}}
\newcommand{\widecheck@}[2]{%
  \begingroup
  \setbox0=\hbox{$\m@th#1#2$}%
  \setbox2=\hbox{$\m@th#1\widehat{\vphantom{#2}\hphantom{#2}}$}%
  \mathord{%
    \ooalign{%
      \hfil
      \raisebox{\dimexpr\ht2+\ht0+.15ex\relax}{%
        \scalebox{1}[-1]{\copy2}%
      }%
      \hfil\cr
      \hfil$\,\m@th#1#2\,$\hfil\cr
    }%
  }%
  \endgroup
}
\makeatother

\definecolor{myblue}{RGB}{45,70,200}
\definecolor{lightblue}{RGB}{220,232,248}
\definecolor{mygreen}{RGB}{40,140,60}
\definecolor{lightgreen}{RGB}{222,239,222}
\definecolor{myred}{RGB}{200,50,40}
\definecolor{lightred}{RGB}{248,225,225}

\allowdisplaybreaks

\theoremstyle{plain}
\newtheorem{thm}{Theorem}[section]

\newtheorem{lem}[thm]{Lemma}

\theoremstyle{definition}
\newtheorem{defn}[thm]{Definition}

\theoremstyle{remark}
\newtheorem{rem}[thm]{Remark}

\newtheorem{exa}[thm]{Example}
\theoremstyle{plain}

\numberwithin{equation}{section}

\theoremstyle{plain} 
\newcommand{\thistheoremname}{}
\newtheorem{genericthm}[thm]{\thistheoremname}

  \newtheorem*{genericthm*}{\thistheoremname}
\newenvironment{namedthm*}[1]
  {\renewcommand{\thistheoremname}{#1}%
   \begin{genericthm*}}
  {\end{genericthm*}}

\newcommand{\norm}[1]{\left\Vert#1\right\Vert}

\newcommand{\eps}{\varepsilon}

\newcommand{\ze}{\zeta}

\newcommand{\D}{{\mathbb D}}

\newcommand{\N}{{\mathbb N}}

\newcommand{\calB}{{\mathcal B}}

\newcommand{\calD}{{\mathcal D}}

\newcommand{\calI}{{\mathcal I}}

\newcommand{\calM}{{\mathcal M }}
\newcommand{\calN}{{\mathcal N}}

\newcommand{\calQ}{{\mathcal Q}}

\newcommand{\calT}{{\mathcal T}}

\newcommand{\supp}{{\textrm{supp}}}

\makeatletter
\newcommand{\vast}{\bBigg@{4}}
\newcommand{\Vast}{\bBigg@{5}}
\makeatother

\newtheorem{opques}[thm]{Open Question}

\usepackage{scalerel}[2014/03/10]
\usepackage[usestackEOL]{stackengine}
\def\intavg{\,\ThisStyle{\ensurestackMath{%
    \stackinset{c}{0\LMpt}{c}{0\LMpt}{\SavedStyle-}{\SavedStyle\phantom{\int}}}%
    \setbox0=\hbox{$\SavedStyle\int\,$}\kern-\wd0}\int}

\def\udot#1{\ifmmode\oalign{$#1$\crcr\hidewidth.\hidewidth
    }\else\oalign{#1\crcr\hidewidth.\hidewidth}\fi}

\def\T{\mathbb{T}}

\def\beq{\begin{equation}}
\def\eeq{\end{equation}}
\makeatletter
\newcommand{\doublewidetilde}[1]{{%
  \mathpalette\double@widetilde{#1}%
}}
\newcommand{\double@widetilde}[2]{%
  \sbox\z@{$\m@th#1\widetilde{#2}$}%
  \ht\z@=.9\ht\z@
  \widetilde{\box\z@}%
}
\makeatother

\def\one{\mbox{1\hspace{-4.25pt}\fontsize{12}{14.4}\selectfont\textrm{1}}}

\usepackage{tikz}

\makeatletter
\def\@makefnmark{%
  \leavevmode
  \raise.9ex\hbox{\fontsize\sf@size\z@\normalfont\tiny\@thefnmark}}
\makeatother

\begin{document}
	
\title[]{A non-testing characterization of bounded and compact composition operators on $\calQ_p$ spaces}

\author{Bingyang Hu}
\address{(Bingyang Hu) Department of Mathematics and Statistics\\
        Auburn University\\
        Auburn, Alabama, U.S.A, 36849}
\email{bzh0108@auburn.edu}

\author{Xiaojing Zhou}
\address{(Xiaojing Zhou) Department of Mathematics and Statistics\\
         Auburn University\\
         Auburn, Alabama, U.S.A, 36849}
\email{xiz0003@auburn.edu}

\begin{abstract}

In this paper, we give symbol-only, non-testing characterizations of bounded and
compact composition operators on $\calQ_p$, $0<p\le 1$, via a novel dyadic trace formulation over Carleson tents for generalized $p$-Nevanlinna counting
functions. Our results resolve an open question raised in Xiao's 2001 book, as well as the diagonal case of Zhao's 2009 question, a longstanding problem in the
theory of $\calQ_p$ spaces. As an application of our main theorems, we also obtain boundedness and compactness criteria for the $\calQ_p$-Carleson measure embedding $\mathrm{id}:\calQ_p\to L^2(WdA)$ for weights $W$ in the
Littlewood--Paley class.

\end{abstract}
\date{\today}

\subjclass [2010] {47B33, 30H25, 30H35, 42B35.}
\keywords{$\calQ_p$ spaces, composition operators, $\calQ_p$-Carleson measure problems, dyadic trace theorem, dyadic $p$-capacity gauge, Littlewood--Paley inequalities, Nevanlinna counting function}

\maketitle


\section{Introduction}
The goal of this paper is to study composition operators on the spaces $\calQ_p$, $0<p\leq 1$, from the point of view of dyadic harmonic analysis over Carleson tents.  The main point is to give a criterion which depends only on the symbol and does not involve testing over all functions in $\calQ_p$, thereby answering a question raised in Xiao's 2001 book~\cite{Xiao2001} and
later formulated by Zhao~\cite{Zhao2009} in 2009.

We begin with some definition.  Let $\D$ be the unit disc, let $dA$ denote normalized area measure, and $H(\D)$ be the collection of all holomorphic functions on $\D$ equipped with the compact--open topology.  For $a\in\D$, let
\[
        \sigma_a(z)=\frac{a-z}{1-\overline a z}
\]
be the automorphism of $\D$ exchanging $0$ and $a$.  The Green function of the unit disc is
\[
        g(z,a)=\log\frac1{|\sigma_a(z)|}.
\]
For \(0<p<\infty\), the space \(\calQ_p\) consists of all analytic functions \(f \in H(\D) \) on \(\D\) such that
\[
\norm{f}_{\calQ_p,*}^2:=\sup_{a\in\D}\int_{\D}|f'(z)|^2g(z,a)^p\,dA(z)<\infty.
\]
Here \(\norm{\cdot}_{\calQ_p,*}\) is a seminorm. We equip \(\calQ_p\) with the norm
\[
\norm{f}_{\calQ_p}:=|f(0)|+\norm{f}_{\calQ_p,*}.
\]  
It is well known that 
\begin{enumerate}
    \item [$\bullet$] when $p=1$, $\calQ_1=BMOA$, the space of bounded mean oscillation analytic; 
    \item [$\bullet$] when $p>1$, $\calQ_p=\calB$, the Bloch space;
    \item [$\bullet$] when $0<p_1<p_2<1$,  $\mathfrak D \subsetneq \calQ_{p_1}\subsetneq \calQ_{p_2}\subsetneq BMOA$, where $\mathfrak D$ is the Dirichlet space. 
\end{enumerate}
To this end, we recall the following Carleson measure characterization of \(\calQ_p\). For \(0<p<\infty\), one has
\begin{equation} \label{20260606eq01}
\norm{f}_{\calQ_p,*}^2
\simeq
\sup_{I\subseteq\T}
\frac{1}{|I|^p}
\int_{Q_I}|f'(z)|^2(1-|z|^2)^p\,dA(z),
\end{equation} 
see, for instance, \cite[Theorem~1.1.3]{Xiao2006}. Here \(\T\) denotes the unit circle, \(I\subseteq\T\) ranges over all arcs, and \(Q_I\) is the Carleson tent associated with \(I\); see, \eqref{20260607eqO1}. Equivalently, \eqref{20260606eq01} says that \(f\in\calQ_p\) if and only if the measure
\[
        d\mu_f(z):=|f'(z)|^2(1-|z|^2)^p\,dA(z)
\]
is a \(p\)-Carleson measure. Motivated by \eqref{20260606eq01}, we define the tent space \(\calT_p\) as follows. For \(0<p<\infty\), \(\calT_p\) consists of all measurable functions $F$ on $\D$ satisfying
\[
\norm{F}_{\calT_p}
:=
\left(
\sup_{I\subseteq\T}
\frac{1}{|I|^p}
\int_{Q_I}|F(z)|^2(1-|z|^2)^p\,dA(z)
\right)^{1/2}
<\infty.
\]
Therefore, $f \in \calQ_p$ if and only if $f' \in \calT_p$, and further one has 
\begin{equation} \label{20260606D1}
\left\|f\right\|_{\calQ_p, *} \simeq \left\|f'\right\|_{\calT_p}
\end{equation} 

\vspace{0.1cm}

Given an analytic self-map $\varphi$ of $\D$, the composition operator $C_\varphi$ is defined by
\[
        C_\varphi f=f\circ\varphi, \qquad f \in H(\D). 
\]
A longstanding open problem in the theory of $\calQ_p$ spaces is to characterize the boundedness and compactness of $C_\varphi$ acting on $\calQ_p$ with $0<p <1$. Previous criteria for this problem are formulated in terms of strong testing conditions involving both the symbol $\varphi$ and functions $f\in \calQ_p$; see, for example, \cite{Lou2001,WirthsXiao2002}. Such characterizations are therefore not conditions purely in terms of the symbol $\varphi$, which are usually considered not very satisfactory. This symbol-only
diagonal problem was already highlighted by Xiao in \cite[p.~22, Note~2.2]{Xiao2001}, and Zhao~\cite{Zhao2009} later formulated the following broader question.

\begin{opques}
What are the conditions in terms of $\varphi$ so that $C_\varphi$ is bounded
or compact on $\calQ_p$, or between different $\calQ_p$ spaces, for
$0<p<1$?
\end{opques}

The main \emph{goal} of this paper is to give a complete answer to the diagonal
case of the above open question, namely to obtain symbol-only, non-testing
characterizations for the boundedness and compactness of
\[
        C_\varphi:\calQ_p\longrightarrow\calQ_p,
        \qquad 0<p<1.
\]
We expect that the methods developed here can also be adapted to the
off-diagonal problem, namely to composition operators between different
\(\calQ_p\) spaces, but we leave this direction for future work.

\medskip 

Before turning to the formulation of our main results, let us briefly recall the endpoint case \(p=1\), for which \(\calQ_1=BMOA\). In this case the theory of composition operators is well-understood. 

\begin{enumerate}
    \item \textbf{Boundedness.} If \(\varphi\) is an analytic self-map of \(\D\), then \(C_\varphi\) is automatically bounded on BMOA. This follows from the Littlewood's subordination principle; see, for instance, \cite{Stein1970}.

    \vspace{0.1cm}

    \item \textbf{Compactness.} The compactness problem is considerably subtler. Bourdon, Cima and Matheson \cite{BCM1999} first characterized compactness on BMOA by a little-oh Carleson measure condition, uniform over the unit ball of BMOA. Smith \cite{Smith1999} subsequently obtained a function-theoretic characterization depending only on the symbol. More precisely, if
    \[
        E(\varphi,t):=\{e^{i\theta}: |\varphi(e^{i\theta})|>t\},
        \qquad 0<t<1,
    \]
    then \(C_\varphi\) is compact on BMOA if and only if
    \[
        \lim_{|\varphi(a)|\to 1}
        \sup_{0<|w|<1}
        |w|^2
        N_{\sigma_{\varphi(a)} \circ \varphi \circ \sigma_a}(w)
        =0
    \]
    and, for every \(0<R<1\),
    \[
        \lim_{t\to1^-}
        \sup_{|\varphi(a)|\leq R}
        m\bigl(\sigma_a(E(\varphi,t))\bigr)
        =0.
    \]
   
\end{enumerate}

This line of research was further developed in several subsequent works. Wulan \cite{Wulan2007} gave simpler compactness conditions on BMOA and VMOA. Wulan, Zheng and Zhu \cite{WulanZhengZhu2009} further obtained compactness criteria for \(C_\varphi\) on BMOA and the Bloch space in terms of the norms of the powers \(\varphi^n\) in the corresponding spaces. Finally, Laitila, Nieminen, Saksman and Tylli \cite{LNST2013} simplified Smith's criterion by showing that the Nevanlinna counting function condition alone characterizes compactness on BMOA; they also proved that weak compactness of \(C_\varphi\) on BMOA is equivalent to compactness. In a related direction, we also refer the reader to several beautiful extensions on the Euclidean counterpart of the above theme, where the quasiconformality of the symbol plays an essential role; see \cite{EJPX2000,KoskelaXiaoZhangZhou2017,XiaoZhou2019}.

\vspace{0.1cm}

This endpoint theory provides an important guide for the present paper. However, the proof of the \(p=1\) theory relies in an essential way on the linear logarithmic structure of the classical Nevanlinna counting function, a feature which is no longer available in the range \(0<p<1\).

Let us recall the main obstruction. The key ingredient in Smith's approach is Littlewood's inequality for the classical Nevanlinna counting function. Recall that, for an analytic self-map \(\varphi\) of \(\D\), the Nevanlinna counting function is defined by
\begin{equation} \label{20260606eqR1}
       N_\varphi(w):=\sum_{\varphi(z)=w}\log\frac{1}{|z|},
        \qquad w\in \D\setminus\{\varphi(0)\},
\end{equation} 
where the sum is counted with multiplicity. Littlewood's inequality asserts that
\[
        N_\varphi(w)\leq N_{\sigma_{\varphi(0)}}(w)
        =\log\frac{1}{|\sigma_{\varphi(0)}(w)|}.
\]
This estimate is the mechanism behind the boundedness of \(C_\varphi\) on BMOA, and its M\"obius invariant little-oh form is the starting point of Smith's compactness criterion.

The proof of Littlewood's inequality is intrinsically tied to the case \(p=1\): the logarithmic kernel is the Green kernel, Jensen's formula gives the corresponding linear zero-counting identity, and the partial Nevanlinna counting functions have the required subharmonicity. For \(0<p<1\), the natural \(p\)-Nevanlinna counting function
\[
        N_{\varphi,p}(w):=\sum_{\varphi(z)=w}
        \left(\log\frac{1}{|z|}\right)^p
\]
does \emph{not}, in general, possess the corresponding subharmonicity structure. Consequently, the Littlewood-type pointwise majorization underlying Smith's method is no longer available. This is one of the main reasons why Smith's approach has not been extendable to the range \(0<p<1\) since Xiao and Zhao posed the question.

\medskip 

The obstruction above suggests that, in the range \(0<p<1\), one should \emph{not} expect a useful characterization in terms of the pointwise behavior of the \(p\)-Nevanlinna counting function. Such pointwise conditions, in the spirit of Littlewood's inequality \eqref{20260606eqR1}, are generally too strong. Instead, we take a different approach: we reformulate the problem using ideas from \emph{dyadic harmonic analysis over Carleson tents}.

\vspace{0.1cm}

In this paper, we shall focus on the most interesting range \(0<p\leq 1\). Before stating our main results, we introduce the notation and definitions needed for their formulation.

For an analytic self-map $\varphi$, $0<p \le 1$, and $a\in\D$, define the generalized $p$-Nevanlinna counting function associated to $a$ by 
$$        
\calN_{p,\varphi,a}(\ze)
:=\sum_{\varphi(z)=\ze}g^p(z,a)=\sum_{\varphi(z)=\ze} \left(\log \frac{1}{|\sigma_a(z)|} \right)^p,
        \qquad \ze\in\D. 
$$
The main reason for introducing $\mathcal N_{p,\varphi,a}$ is the following
change-of-variables formula:
\begin{equation} \label{20260606E1}
        \int_E \calN_{p,\varphi,a}(\ze) dA(\ze)
        =\int_{\varphi^{-1}(E)}g(z,a)^p|\varphi'(z)|^2 dA(z)
\end{equation}
for every Borel set $E\subseteq\D$.  This defines a natural counting measure associated with $C_\varphi$ and the $\calQ_p$ seminorm.

We next introduce a dyadic capacity condition. Let $\mathcal D$ be a dyadic
system on $\mathbb T$. For $I\in\mathcal D$, let $Q_I$ denote the associated
Carleson tent,
\begin{equation} \label{20260607eqO1}
        Q_I:=\left\{re^{it}: e^{it}\in I,\ 1-|I|\le r<1\right\}.
\end{equation} 
We also denote by
\[
        Q_I^{\mathrm{up}}:=
        \left\{re^{it}: e^{it}\in I,\ 1-|I|\le r<1-\frac{|I|}{2}\right\}
\]
the upper half tent of $Q_I$. For a nonnegative sequence
$h=\{h(I)\}_{I\in\mathcal D}$ and $0<p\leq 1$, define the
\emph{dyadic $p$--capacity gauge} of $h$ by
\begin{equation}\label{20260606eq04}
\norm{h}_{\mathcal B_p(\mathcal D)}
:=
\inf\left\{
        \sum_{J\in\mathcal D}\lambda_J |J|^p:
        \lambda_J\ge0,\quad
        \sum_{J\supseteq I}\lambda_J\ge h(I)\ \text{for all }I\in\mathcal D
\right\}.
\end{equation}

This gauge is motivated by the dyadic version of Hausdorff content. Indeed, the coefficients \(\{\lambda_J\}\) may be viewed as discrete covering weights on the dyadic tree, while the cost \(\sum_J\lambda_J|J|^p\) is the corresponding \(p\)-dimensional dyadic content; for background on Hausdorff measures, contents, and capacities, see Mattila~\cite{Mattila1995}. Related capacity and tree-based ideas have also played an important role in the theory of the Dirichlet space and related function spaces, from Stegenga's multiplier theorem \cite{Stegenga1980} and the work of Kerman and Sawyer \cite{KermanSawyer1988} on Carleson measures for Dirichlet-type spaces, to the elegant Bergman tree analysis developed by Arcozzi, Rochberg, Sawyer, Wick, and their coauthors; see, for instance, \cite{ArcozziRochbergSawyerWick2011,ArcozziRochbergSawyerWick2019}.

\vspace{0.1cm}

To this end, for any $a\in\D$, and $I \subseteq \T$, set
\begin{equation} \label{20260606eqF1}
        h_a^{\varphi}(I)
        :=\frac{1}{|I|^{p+2}}\int_{Q_I^{\textrm{up}}}\calN_{p,\varphi,a}(\ze) dA(\ze).
\end{equation} 
Our first main theorem is the following.

\begin{thm}[Boundedness]\label{20260606thm01}
Let $0<p\le1$ and let $\varphi$ be an analytic self-map of $\D$.  Then $C_\varphi:\calQ_p\to \calQ_p$ is bounded if and only if there exists a dyadic system $\calD$ on $\T$ such that
\begin{equation}\label{20260606eq06}
         \sup_{a\in\D}
        \norm{h_a^{\varphi}}_{\calB_p(\calD)}<\infty.
\end{equation}
Moreover,
\begin{equation} \label{20260606eq07}
        \left|\!\left|\!\left| C_\varphi \right|\!\right|\!\right|_{\calQ_p,*,\,\textnormal{op}}^2
        \simeq
        \sup_{\calD} \left[\sup_{a\in\D}
        \norm{h_a^{\varphi}}_{\calB_p(\calD)} \right]  \simeq \inf_{\calD} \left[\sup_{a\in\D}
        \norm{h_a^{\varphi}}_{\calB_p(\calD)} \right], 
\end{equation}
where the infimum and supremum are taken over all dyadic systems \(\calD\) on
\(\T\).
\end{thm}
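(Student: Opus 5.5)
The plan rests on the change-of-variables formula \eqref{20260606E1}. For $f\in\calQ_p$ it gives
\[
\int_\D |(f\circ\varphi)'(z)|^2 g(z,a)^p\,dA(z)=\int_\D |f'(\ze)|^2\,\calN_{p,\varphi,a}(\ze)\,dA(\ze).
\]
Hence $\left|\!\left|\!\left| C_\varphi \right|\!\right|\!\right|_{\calQ_p,*,\textnormal{op}}^2$ is the smallest constant $C$ with $\sup_a\int|f'|^2\,d\nu_a\le C\|f\|_{\calQ_p,*}^2$, where $d\nu_a:=\calN_{p,\varphi,a}\,dA$. By \eqref{20260606D1}, the theorem therefore reduces to a trace/Carleson-embedding statement for the derivative map $f\mapsto f'$ into $L^2(\nu_a)$, uniformly in $a$. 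I will prove two inequalities, each valid for \emph{every} dyadic system $\calD$ with constants independent of $\calD$:
\[
\int|f'|^2d\nu_a\lesssim \|h_a^\varphi\|_{\calB_p(\calD)}\,\|f'\|_{\calT_p}^2,
\qquad
\|h_a^\varphi\|_{\calB_p(\calD)}\lesssim \sup_{\|f\|_{\calQ_p,*}\le1}\int|f'|^2d\nu_a .
\]
Together they give $\sup_\calD$ and $\inf_\calD$ comparable to the operator norm, which is \eqref{20260606eq07}. Boundedness for one $\calD$ then follows as well.

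\textbf{Sufficiency.} The upper half tents $Q_I^{\mathrm{up}}$, $I\in\calD$, tile $\D$ up to a compact set near $0$, which is handled trivially. By subharmonicity of $|f'|^2$ on hyperbolic discs of fixed radius,
\[
\sup_{Q_I^{\mathrm{up}}}|f'|^2\lesssim |I|^{-2}\int_{\widetilde Q_I}|f'|^2\,dA ,
\]
where $\widetilde Q_I$ is a fixed hyperbolic enlargement of $Q_I^{\mathrm{up}}$. Hence
\[
\int|f'|^2d\nu_a\lesssim\sum_I h_a^\varphi(I)\,w(I),\qquad w(I):=|I|^{p}\int_{\widetilde Q_I}|f'|^2\,dA\simeq\int_{\widetilde Q_I}|f'|^2(1-|z|^2)^p\,dA .
\]
Now take any admissible family $\{\lambda_J\}$ for $h_a^\varphi$ and interchange sums:
\[
\sum_I h(I)w(I)\le\sum_J\lambda_J\sum_{I\subseteq J}w(I).
\]
Since the sets $\widetilde Q_I$, $I\subseteq J$, have bounded overlap and lie in $Q_{3J}$, the inner sum is bounded by the $p$-Carleson quantity over the (non-dyadic) arc $3J$. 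By \eqref{20260606eq01} this is $\lesssim|J|^p\|f\|_{\calQ_p,*}^2$. Taking the infimum over $\{\lambda_J\}$ gives the first inequality.

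\textbf{Necessity: LP duality.} The gauge \eqref{20260606eq04} is a linear program. Its dual is
\[
\|h\|_{\calB_p(\calD)}=\sup\Big\{\sum_I h(I)w(I):\ w\ge0,\ \sum_{I\subseteq J}w(I)\le|J|^p\ \ \forall J\in\calD\Big\}.
\]
I would prove this first on finite truncations of the tree (at most $N$ generations), where strong duality is standard. I would then pass $N\to\infty$ by monotonicity and weak-$*$ compactness of the admissible $\lambda$'s.

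\textbf{Necessity: testing functions.} Given a dual-admissible $w$ (a dyadic $p$-Carleson sequence), I construct test functions by randomization. Let $z_I$ be the center of $Q_I^{\mathrm{up}}$ and take the normalized kernels $K_I(z)=|I|\,(1-\overline{z_I}z)^{-1}$, so that $|K_I'|\simeq|I|^{-1}$ on $Q_I^{\mathrm{up}}$. Set $c_I:=w(I)^{1/2}|I|^{-p/2}$ and $f_\eps:=\sum_I\eps_I c_IK_I$ with random signs $\eps_I$. The Carleson condition on $w$ is exactly $\sum_{I\subseteq J}|c_I|^2|I|^p\le|J|^p$. By the standard kernel/atomic decomposition estimate for $\calQ_p$, this yields $\|f_\eps\|_{\calQ_p,*}\lesssim1$ uniformly in $\eps$; this is proved by estimating $\int_{Q_J}|f_\eps'|^2(1-|z|^2)^p$ via Schur-type off-diagonal decay of the kernels, valid for $0<p\le1$. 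Khintchine's inequality and orthogonality of the signs then give
\[
\mathbb E\int|f_\eps'|^2d\nu_a=\sum_I|c_I|^2\int|K_I'|^2d\nu_a\ge\sum_I|c_I|^2|I|^{-2}\,\nu_a(Q_I^{\mathrm{up}})\gtrsim\sum_I h_a^\varphi(I)\,w(I).
\]
So some sign choice gives $\int|f_\eps'|^2d\nu_a\gtrsim\sum_I h_a^\varphi(I)\,w(I)$. Taking the supremum over $w$ gives the second inequality.

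\textbf{Main obstacle.} I expect the hardest steps to be the uniform $\calQ_p$ bound for the randomized kernel sums with merely $p$-Carleson coefficients, and the rigorous infinite-dimensional duality. The first is where the restriction $p\le1$ and the choice of kernel exponent must be tuned; I would try a higher power $(1-\overline{z_I}z)^{-s}$ with $s$ large if the Schur test fails for exponent one.
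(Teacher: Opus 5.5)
Your reduction via \eqref{20260606E1} and your sufficiency argument are essentially the paper's: sub-mean value on enlarged upper tents, finite overlap, and \eqref{20260606eq01}. Interchanging sums against an admissible $\{\lambda_J\}$ is exactly the weak-duality half of Lemma~\ref{20260606lem01}, so strong duality is only needed for necessity. Your compactness step as $N\to\infty$ supplies the inequality $\|h\|_{\calB_p(\calD)}\le\lim_N\|h\|_{\calB_p(\calD_N)}$, which monotonicity alone does not give.

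The real difference is in the test functions. The paper randomizes non-analytic atoms $b_{I,\nu}$ with disjoint supports, so $\|G_\eps\|_{\calT_p}$ does not depend on the signs. It then gets analyticity from the $\calT_p$-boundedness of $B_p$ (Lemma~\ref{20260606lem02}). The price is Lemma~\ref{20260606lem03}, which requires splitting $Q_I^{\mathrm{up}}$ into $N_0$ pieces on which the kernel has almost constant argument.

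Your analytic atoms make the lower bound $|K_I'|\gtrsim|I|^{-1}$ on $Q_I^{\mathrm{up}}$ free. The one exception is the degenerate top atom $K_{\T}$, which you should replace by $z$. The price is that the sign-uniform upper bound must now come from a positive majorant. With $K_I=|I|^s(1-\overline{z_I}z)^{-s}$,
\[
|f_\eps'(z)|\lesssim\int_{\D}\frac{(1-|w|)^{s-1}g(w)}{|1-\overline{w}z|^{s+1}}\,dA(w),\qquad g:=\sum_I|c_I|\,|I|^{-1}\one_{Q_I^{\mathrm{up}}},
\]
where $\|g\|_{\calT_p}\lesssim1$ by Lemma~\ref{20260606lem04}. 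This positive operator is bounded on $L^2(dA_p)$ iff $p+1<2s$ (Forelli--Rudin). Under that condition the far-part estimate of Lemma~\ref{20260606lem02} goes through verbatim, giving $\|f_\eps\|_{\calQ_p,*}\lesssim1$ for every choice of signs.

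Consequently, your primary choice $s=1$ works for $0<p<1$ but fails at the endpoint $p=1$, and the failure is in the claim itself, not just the proof. Take the standard dyadic system and $w(I)=|I|/\log^2(e/|I|)$ for $2^{-N}\le|I|\le1$. This is a dyadic $1$-Carleson sequence with packing constant independent of $N$, and $c_I\simeq1/\log(e/|I|)$. With $\eps_I=\operatorname{sgn}\mathrm{Re}\,z_I$,
\[
\mathrm{Re}\,f_\eps'(0)=\sum_I c_I|I|\,|\mathrm{Re}\,z_I|\gtrsim\sum_{k=2}^{N}\frac1k\simeq\log N,
\]
whereas $|f'(0)|\lesssim\|f\|_{\calQ_1,*}$. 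So $\sup_\eps\|f_\eps\|_{\calQ_1,*}\to\infty$, and the assertion that the Schur-type estimate with exponent one holds for all $0<p\le1$ is false.

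Your fallback is therefore mandatory, not optional. Any $s>1$ works for all $0<p\le1$. The choice $s=1+p$ gives the derivative kernel $(1-\overline{z_I}z)^{-2-p}$, which is essentially the paper's $B_pb_{I,\nu}$. With that choice your route is complete, and it is slightly simpler than the paper's on the lower-bound side.
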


\begin{rem}
When \(p=1\), we have \(\calQ_1=BMOA\).  In this endpoint case, every
analytic self-map \(\varphi\) of \(\D\) induces a bounded composition operator
on BMOA, by Littlewood's subordination principle.  Thus \eqref{20260606eq06} is automatic when \(p=1\).

Let us also explain how this automatic boundedness is reflected in the dyadic
trace formulation, in the sense of Theorem \ref{20260606thm03} below.  Fix \(a\in\D\), put \(b=\varphi(a)\), and set
\[
        \psi_a:=\sigma_b\circ\varphi\circ\sigma_a .
\]
Then \(\psi_a(0)=0\), and
\[
        \mathcal N_{1,\varphi,a}(\zeta)
        =
        N_{\psi_a}\bigl(\sigma_b(\zeta)\bigr).
\]
By Littlewood's inequality,
\[
        N_{\psi_a}(w)\le \log\frac1{|w|},
        \qquad w\in\D.
\]
Consequently,
\[
        \mathcal N_{1,\varphi,a}(\zeta)
        \le
        \log\frac1{|\sigma_b(\zeta)|}
        =
        g(\zeta,b).
\]
Then Littlewood--Paley identity gives, uniformly in \(b\in\D\),
\[
        \int_{\D}|F'(\zeta)|^2 g(\zeta,b)\,dA(\zeta)
        \simeq
        \|F\circ\sigma_b-F(b)\|_{H^2}^2
        \le
        \|F\|_{BMOA,*}^2 .
\]
Applying Theorem~\ref{20260606thm03} below with \(p=1\) and
\(W(\zeta)=g(\zeta,b)\), we obtain
\begin{equation} \label{20260607eq01}
        \left\|
        \left\{
        |I|^{-3}\int_{Q_I^{\mathrm{up}}}g(\zeta,b)\,dA(\zeta)
        \right\}_{I\in\calD}
        \right\|_{\calB_1(\calD)}
        \lesssim 1,
\end{equation}
with a constant independent of \(b\).  Since the dyadic $1$--capacity gauge is monotone and
\(\mathcal N_{1,\varphi,a}\le g(\cdot,\varphi(a))\), it follows that
\[
        \sup_{a\in\D}
        \|h_a^\varphi\|_{\calB_1(\calD)}
        \lesssim 1 .
\]
Thus the dyadic condition in Theorem~\ref{20260606thm01} agrees with the
classical automatic boundedness of composition operators on BMOA.  Notice
that this does not mean that the individual quantities \(h_a^\varphi(I)\) are
uniformly bounded; rather, the logarithmic singularity \eqref{20260607eq01}
is captured by the
 dyadic $1$--capacity gauge.
\end{rem}

We prove Theorem~\ref{20260606thm01} by establishing the following stronger dyadic
trace theorem, which characterizes the boundedness of the differential operator
\begin{equation} \label{20260607eqA10}
        \frac{d}{dz}: \calQ_p\longrightarrow  L^2(WdA), \qquad f \mapsto f'
\end{equation} 
for the weight \(W\).

\begin{thm} \label{20260606thm03}
Let \(0<p\le1\), and \(W\ge0\) be a locally integrable function on \(\D\). For every arc \(I\subseteq\T\), define
\begin{equation} \label{20260607A43}
        h_W(I):=\frac{1}{|I|^{p+2}}\int_{Q_I^{\textnormal{up}}}W(z)\,dA(z).
\end{equation} 
Then the following are equivalent.
\begin{enumerate}
\item[(i)] There exists a constant \(C\) such that
\begin{equation}\label{20260606eq15}
        \int_{\D}|F'(z)|^2W(z)\,dA(z)
        \le C\norm{F}_{\calQ_p,*}^2,
        \qquad F\in \calQ_p.
\end{equation}
\item[(ii)] There exists a dyadic system \(\calD\) on \(\T\) such that
\begin{equation}\label{20260606eq16}
       \norm{h_W}_{\calB_p(\calD)}<\infty,
\end{equation}
where, for each fixed \(\calD\), the sequence is understood as \(\{h_W(I)\}_{I\in\calD}\).
\end{enumerate}
Moreover, if \(C_{\mathrm{best}}\) denotes the best constant in \eqref{20260606eq15}, then
\[
        C_{\mathrm{best}}
        \simeq
        \inf_{\mathcal D}\|h_W\|_{\mathcal B_p(\mathcal D)}
        \simeq
        \sup_{\mathcal D}\|h_W\|_{\mathcal B_p(\mathcal D)},
\]
where the infimum and supremum are taken over all dyadic systems \(\mathcal D\) on \(\T\).
\end{thm}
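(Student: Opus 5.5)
The plan is to discretize both sides over the dyadic tree and then recognize the dyadic $p$--capacity gauge as the dual norm of a discrete $\calQ_p$-type space. This duality is the main obstacle, and I treat it in the third paragraph.

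\textbf{Sufficiency ((ii)$\Rightarrow$(i)).} Fix a dyadic system $\calD$ and an admissible family $\{\lambda_J\}$ with $\sum_{J\supseteq I}\lambda_J\ge h_W(I)$ for all $I\in\calD$. The upper half tents $\{Q_I^{\mathrm{up}}\}_{I\in\calD}$ tile $\D$ up to a compact set near the origin, and that set is harmless. By subharmonicity of $|F'|^2$ on a slightly enlarged Whitney box,
\[
\int_{Q_I^{\mathrm{up}}}|F'|^2W\,dA\lesssim h_W(I)\,|I|^{p+2}\sup_{Q_I^{\mathrm{up}}}|F'|^2\lesssim h_W(I)\,|I|^{p}\int_{\widetilde Q_I^{\mathrm{up}}}|F'|^2\,dA .
\]
Write $a_I:=|I|^{p}\int_{\widetilde Q_I^{\mathrm{up}}}|F'|^2\,dA\simeq\int_{\widetilde Q_I^{\mathrm{up}}}|F'|^2(1-|z|^2)^p\,dA$. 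Inserting $h_W(I)\le\sum_{J\supseteq I}\lambda_J$ and exchanging the sums gives
\[
\int_\D|F'|^2W\,dA\lesssim\sum_J\lambda_J\sum_{I\subseteq J}a_I\lesssim\sum_J\lambda_J\int_{\widetilde Q_J}|F'|^2(1-|z|^2)^p\,dA\lesssim\|F\|_{\calQ_p,*}^2\sum_J\lambda_J|J|^p .
\]
The last step uses \eqref{20260606eq01}; the bounded overlap of the enlarged boxes is absorbed into a slightly larger tent. Taking the infimum over admissible $\lambda$ yields $C_{\mathrm{best}}\lesssim\|h_W\|_{\calB_p(\calD)}$ for every $\calD$, and hence $C_{\mathrm{best}}\lesssim\inf_\calD\|h_W\|_{\calB_p(\calD)}$.

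\textbf{Necessity ((i)$\Rightarrow$(ii), for every $\calD$).} The gauge is a linear program. By LP/Hahn--Banach duality (first on finite subtrees of $\calD$ truncated at a depth $n$, then monotone passage $n\to\infty$),
\[
\|h\|_{\calB_p(\calD)}=\sup\Big\{\sum_I h(I)\mu_I:\ \mu_I\ge0,\ \sum_{I\subseteq J}\mu_I\le|J|^p\ \forall J\in\calD\Big\}.
\]
So it suffices, given a dyadic $p$-Carleson sequence $\mu$ (packing constant $\le1$), to build $F\in\calQ_p$ with $\|F\|_{\calQ_p,*}\lesssim1$ and $|F'|^2\gtrsim\mu_I|I|^{-p-2}$ on most of $Q_I^{\mathrm{up}}$. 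Then $\int|F'|^2W\gtrsim\sum_I\mu_I h_W(I)$, which gives $\sup_\calD\|h_W\|_{\calB_p(\calD)}\lesssim C_{\mathrm{best}}$. Chaining the two directions closes $\inf\le\sup\lesssim C_{\mathrm{best}}\lesssim\inf$, and the equivalence of (i) and (ii) follows.

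\textbf{The obstacle: realizing $\mu$ by an analytic function.} The natural candidate is a random-sign lacunary sum
\[
F_\epsilon(z)=\sum_I\epsilon_I\,\mu_I^{1/2}|I|^{1-p/2}\,k_I(z),
\]
where $k_I$ is a normalized derivative-of-Poisson/atom type function. Its derivative $k_I'$ should have size $\sim|I|^{-2}$ on $Q_I^{\mathrm{up}}$ and decay like $(|I|/|1-\bar c_Iz|)^{M}|1-\bar c_Iz|^{-2}$ away from it, where $c_I=(1-|I|)e^{i\theta_I}$ is the center of the box. By Khintchine's inequality,
\[
\mathbb E|F_\epsilon'|^2=\sum_I\mu_I|I|^{2-p}|k_I'|^2,
\]
which dominates $\mu_I|I|^{-p-2}$ on $Q_I^{\mathrm{up}}$. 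For the upper bound, Fubini applied to the $\calQ_p$ Carleson norm reduces matters to showing that the sequence $\mu_I$, smeared by the off-diagonal kernel $(|I|/|1-\bar c_Iz|)^M$, is again $p$-Carleson. This holds by summing the tail over dyadic annuli, using $p\le1<M$. The remaining difficulty is that the $\calQ_p$ norm is a supremum, so an expectation bound does not control $\|F_\epsilon\|_{\calQ_p,*}$ directly. I would avoid this by not needing the supremum at all. I would use the expectation bound $\mathbb E\int|F_\epsilon'|^2W\,dA\ge c\sum_I\mu_Ih_W(I)$ together with a deterministic bound $\|F_\epsilon\|_{\calQ_p,*}\lesssim1$ valid for all signs. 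That deterministic bound should follow because the $\calQ_p$ norm of a sum with $|I|$-separated atoms is controlled by the square-function/almost-orthogonality estimate on each tent, which is sign-independent up to the off-diagonal terms handled above. If the deterministic bound fails, the fallback is to use a positive construction instead, $F=\sum_I\mu_I^{1/2}|I|^{1-p/2}k_I$ with $k_I'$ having positive real part on $Q_I^{\mathrm{up}}$, so that no cancellation occurs.
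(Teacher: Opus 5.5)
Your sufficiency argument is correct, and slightly more economical than the paper's: you only use a feasible family $\{\lambda_J\}$ and exchange sums (weak duality), while the paper routes this direction through Lemma~\ref{20260606lem01}. The duality reduction and the Rademacher lower bound in the necessity part also match the paper.

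The gap is the step you flag yourself. You never prove $\|F_\eps\|_{\calQ_p,*}\lesssim1$ \emph{for every choice of signs}, and neither justification you offer gives it. Your square-function computation only controls $\mathbb E_\eps\int_{Q_K}|F_\eps'|^2\,dA_p$. For a fixed sign pattern, $|F_\eps'|^2$ differs from $\sum_I\mu_I|I|^{2-p}|k_I'|^2$ by the cross terms $\eps_I\eps_J a_Ia_J\,k_I'\overline{k_J'}$, where $a_I:=\mu_I^{1/2}|I|^{1-p/2}$. These are not the spatial tails you treated inside the diagonal sum, and they cannot be waved away. For the natural atoms $k_I'(z)=|I|^M(1-\overline{c_I}z)^{-M-2}$ one has $|k_I'|\simeq|I|^{-2}$ on the whole tent $Q_I$, not just on $Q_I^{\textnormal{up}}$. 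So on each $Q_J^{\textnormal{up}}$ every ancestor of $J$ contributes, and controlling this uniformly in the signs is a genuine estimate that needs the packing condition.

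The fallback fails as well. Having $\operatorname{Re}k_I'>0$ on $Q_I^{\textnormal{up}}$ does not prevent cancellation against the other atoms there. Suppose instead you require $\operatorname{Re}k_J'\ge0$ on all of $\D$, which is what ``no cancellation'' demands since other atoms can sit anywhere. Your lower bound also needs $\operatorname{Re}k_J'\gtrsim|J|^{-2}$ on $Q_J^{\textnormal{up}}$. Harnack's inequality then forces $\operatorname{Re}k_J'(0)\gtrsim|J|\operatorname{Re}k_J'(c_J)\gtrsim|J|^{-1}$. Take a chain of $N$ consecutive nested dyadic intervals with $\mu_J=|J|^p$; this is a $p$-Carleson sequence with constant independent of $N$, and $a_J=|J|$. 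You get $\operatorname{Re}F'(0)\gtrsim N$, whereas $|F'(0)|\lesssim\|F\|_{\calQ_p,*}$.

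Either of two devices closes the gap.

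(a) Bound all sign patterns at once by the positive majorant. With the atoms above (any $M>0$), Cauchy--Schwarz gives
\[
\Bigl(\sum_I a_I|k_I'(z)|\Bigr)^2\le\Bigl(\sum_I a_I^2|I|^{-1}|k_I'(z)|\Bigr)\Bigl(\sum_I|I|\,|k_I'(z)|\Bigr)\lesssim(1-|z|)^{-1}\sum_I a_I^2|I|^{-1}|k_I'(z)|.
\]
Integrating over $Q_K$ against $dA_p$ produces the weight $(1-|z|)^{p-1}$, which is integrable since $p>0$. Forelli--Rudin estimates then show that each $I$ of length at most $|K|$ near $K$ contributes $\lesssim\mu_I$, hence $\lesssim|K|^p$ in total by the packing condition, as in Lemma~\ref{20260606lem04}. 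Larger or distant intervals give convergent geometric series.

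(b) Use the paper's device. Randomize the disjointly supported non-analytic atoms $b_{I,\nu}=A_p(Q_{I,\nu})^{-1/2}\one_{Q_{I,\nu}}$ on a fine Whitney subdivision of $Q_I^{\textnormal{up}}$. Then $|G_\eps|$ is literally independent of the signs, so $\|G_\eps\|_{\calT_p}\lesssim1$ follows from Lemma~\ref{20260606lem04}. The function $F_\eps'=B_pG_\eps$ inherits this bound from the $\calT_p$-boundedness of $B_p$ (Lemma~\ref{20260606lem02}).

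The price of (b) is the lower bound $|B_pb_{I,\nu}|\gtrsim|I|^{-(p+2)/2}$ on $Q_{I,\nu}$. This is why the pieces must be small enough that the Bergman kernel has nearly constant argument. The price of (a) is carrying out the Schur-type summation by hand.
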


\begin{rem}
A key observation behind Theorems \ref{20260606thm01} and \ref{20260606thm03} is that the $\calQ_p$-seminorm admits a dyadic interpretation: it can be formulated as a \emph{Carleson packing condition}; see \eqref{20260606eqB3}. This reformulation allows us to transfer the original complex-analytic problem to a problem in dyadic harmonic analysis on Carleson tents, and is the main reason for the use of dyadic techniques throughout the proof.
\end{rem}

\begin{rem} \label{20260611rem01}
Theorem~\ref{20260606thm03} is closely related to another well-known open
problem in the theory of \(\calQ_p\) spaces, namely \emph{the \(\calQ_p\)-Carleson
measure problem}\footnote{The authors thank
Ruhan Zhao for bringing this open problem to their attention.}. In one of its basic forms, this problem asks for necessary
and sufficient conditions on a positive measure \(W dA\) such that the embedding
\begin{equation}\label{20260607eqA11}
        \mathrm{id}:\calQ_p \longrightarrow L^2(W dA)
\end{equation}
is bounded, or compact. The dyadic trace formulation developed in
Theorem~\ref{20260606thm03} suggests a possible new route toward this problem.

We would like to point out, however, that the \(\calQ_p\)-Carleson measure
problem is \emph{not} a direct consequence of Theorem~\ref{20260606thm03}. Indeed,
the embedding considered in Theorem~\ref{20260606thm03} concerns the derivative
trace
\[
        F\longmapsto F',
        \qquad
        \int_{\D}|F'(z)|^2 W(z)\,dA(z),
\]
and may be viewed as an embedding of \(\calQ_p\) into a weighted Dirichlet-type
space associated with \(W\). In this setting the relevant dyadic quantity $h_W(I)$ is
local; see, \eqref{20260607A43}. By contrast, in \eqref{20260607eqA11} one has to control \(F\) itself. Since
\[
        F(z)-F(0)=\int_0^z F'(\zeta)\,d\zeta,
\]
the corresponding dyadic quantity should take into account not only the local
mass associated with \(I\), but also the contributions coming from the dyadic
ancestors of \(I\). Thus one should expect a refined dyadic
gauge, rather than a direct use of the gauge appearing in
Theorem~\ref{20260606thm03}. 

\vspace{0.1cm}

On the other hand, Theorem~\ref{20260606thm03} and its compact counterpart
Theorem~\ref{20260606prop01} do yield a complete answer to the
$\calQ_p$-Carleson measure problem for a large class of weights. Indeed, when the weight $W$ admits a Littlewood--Paley type derivative
norm equivalence, the embedding \eqref{20260607eqA11} can be reduced to the
corresponding embedding for $F'$ with a modified weight. This reduction allows
us to apply the dyadic trace theorem directly and obtain boundedness and
compactness criteria for such weights; see Section~\ref{Sec06}.
The fully general $\calQ_p$-Carleson measure problem appears to require a
refined dyadic gauge (as explained above), and we plan to investigate this direction in future work.

\end{rem}

The main task of this paper is to prove Theorem~\ref{20260606thm03}. Once the boundedness criterion is established, the corresponding compactness criterion follows by a standard vanishing modification of the same argument. For this purpose, we define the following ``truncated" version of \eqref{20260606eqF1}: for $0<\rho<1$ $a \in \D$, and $I \subseteq \T$, define
$$
        H_{a}^{\varphi,\rho}(I)
        :=\frac{1}{|I|^{p+2}}
        \int_{Q^{\textnormal{up}}_I\cap\{\rho<|\ze|<1\}}
        \calN_{p,\varphi,a}(\ze)dA(\ze).
$$

\begin{thm}[Compactness]\label{20260606thm02}
Let $0<p\le1$ and $\varphi$ be an analytic self-map of $\D$.  Then $C_\varphi:\calQ_p\to \calQ_p$
is compact if and only if $C_\varphi: \calQ_p \to \calQ_p$ is bounded and 
$$
\lim_{\rho\to1^-}\sup_{a\in\D} \sup_{\calD} \norm{H_{a}^{\varphi,\rho}}_{\calB_p(\calD)}=0, 
$$
where the second supremum is taken over all dyadic systems \(\calD\) on \(\T\).
\end{thm}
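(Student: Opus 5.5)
Our plan is to reduce Theorem~\ref{20260606thm02} to Theorem~\ref{20260606thm03}, applied to truncated weights, combined with the usual weak-convergence characterization of compactness. First, by Montel's theorem and the $\calQ_p$ version of the standard lemma of Tjani type, $C_\varphi$ is compact on $\calQ_p$ if and only if it is bounded and $\|f_n\circ\varphi\|_{\calQ_p,*}\to0$ for every bounded sequence $\{f_n\}\subseteq\calQ_p$ that converges to $0$ uniformly on compact subsets of $\D$. For fixed $a\in\D$, the change of variables \eqref{20260606E1} gives
\[
\int_{\D}|(f\circ\varphi)'|^2g(z,a)^p\,dA(z)=\int_{\D}|f'(\ze)|^2\calN_{p,\varphi,a}(\ze)\,dA(\ze).
\]
We split the right-hand side into the region $\{|\ze|\le\rho\}$ and the region $\{\rho<|\ze|<1\}$. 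Put $W_{a,\rho}:=\calN_{p,\varphi,a}\one_{\{\rho<|\ze|<1\}}$. Then $h_{W_{a,\rho}}=H_a^{\varphi,\rho}$ on every dyadic system.

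\textbf{Sufficiency.} Assume $C_\varphi$ is bounded and the vanishing condition holds. Theorem~\ref{20260606thm03}, with its constant comparison $C_{\rm best}\simeq\sup_\calD\|h_W\|_{\calB_p(\calD)}$, gives
\[
\int_{\rho<|\ze|<1}|f_n'|^2\calN_{p,\varphi,a}\,dA\lesssim \sup_{\calD}\|H_a^{\varphi,\rho}\|_{\calB_p(\calD)}\,\|f_n\|^2_{\calQ_p,*}.
\]
This is small uniformly in $a$ and $n$ once $\rho$ is close to $1$. For the compact part we use the bound $\sup_{|\ze|\le\rho}|f_n'|\to0$. We also need the uniform bound $\sup_a\int_{|\ze|\le\rho}\calN_{p,\varphi,a}\,dA<\infty$. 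To get it, test boundedness on $f(z)=z$: this gives $\sup_a\int_{\D}\calN_{p,\varphi,a}\,dA=\sup_a\int|\varphi'|^2g(\cdot,a)^p\,dA\lesssim\|C_\varphi z\|^2_{\calQ_p,*}<\infty$. Together these show $\|f_n\circ\varphi\|_{\calQ_p,*}\to0$, and $|f_n(\varphi(0))|\to0$ is trivial. So $C_\varphi$ is compact.

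\textbf{Necessity.} This is the main obstacle. Compactness gives that the bounded operators $T_{a,\rho}f:=f'\,\one_{\{\rho<|\ze|<1\}}$ from $\calQ_p$ into $L^2(\calN_{p,\varphi,a}dA)$ have norms tending to $0$ uniformly in $a$ as $\rho\to1$. The argument is by contradiction. Otherwise there are $\rho_n\to1$, $a_n$, and unit vectors $f_n$ with $\int_{|\ze|>\rho_n}|f_n'|^2\calN_{p,\varphi,a_n}\ge\delta$. We may normalize $f_n(0)=0$. Passing to a subsequence, $f_n\to f$ locally uniformly with $f\in\calQ_p$, and then $g_n=f_n-f$ tends to $0$ locally uniformly and is bounded, so compactness gives $\|g_n\circ\varphi\|_{\calQ_p,*}\to0$.

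It remains to show that $\int_{|\ze|>\rho_n}|f'|^2\calN_{p,\varphi,a_n}\,dA\to0$ for the single fixed $f$. Here a single function is not automatically "little-oh" uniformly in $a$. We would first try approximating $f$ by dilations $f_r$, which have bounded derivative. However, $f_r\to f$ does not hold in the $\calQ_p$ norm. So instead we would argue with $\varphi$-side mass: the tails $\int_{|\varphi|>\rho_n}|\varphi'|^2g(\cdot,a_n)^p$ should tend to $0$, which follows from compactness applied to the functions $z^k$, as in Wulan--Zheng--Zhu. Combined with a Carleson-box splitting of $|f'|^2$, this should handle the fixed $f$.

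Once uniform smallness of $\|T_{a,\rho}\|$ is established, the lower bound $\sup_\calD\|h_W\|_{\calB_p(\calD)}\lesssim C_{\rm best}$ in Theorem~\ref{20260606thm03} converts it into the stated vanishing of $\sup_a\sup_\calD\|H_a^{\varphi,\rho}\|_{\calB_p(\calD)}$. Boundedness is immediate from compactness.
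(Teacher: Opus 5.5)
Your sufficiency half is the paper's argument: the truncated weight $\one_{\{|\ze|>\rho\}}\calN_{p,\varphi,a}$ goes into Theorem~\ref{20260606thm03}, and the test function $f(z)=z$ handles the interior. Your final conversion via the lower bound of Theorem~\ref{20260606thm03} is also fine.

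The necessity half has a genuine gap, exactly at the step you flagged. After extracting a locally uniform limit $f$ of the near-extremal unit vectors $f_n$, everything rests on
\[
\lim_{n\to\infty}\int_{\{|\ze|>\rho_n\}}|f'(\ze)|^2\,\calN_{p,\varphi,a_n}(\ze)\,dA(\ze)=0
\]
for this fixed $f\in\calQ_p$, and you only assert it. The mechanism you sketch does not work as stated:
\begin{itemize}
\item For $0<p<1$ the monomials are unbounded in $\calQ_p$; the $a=0$ term alone gives $\|z^k\|_{\calQ_p,*}^2\gtrsim k^{1-p}$. So compactness cannot be applied to $\{z^k\}$ as in the BMOA argument of Wulan--Zheng--Zhu.
\item A $\varphi$-side tail bound alone cannot control $|f'|^2$, which is unbounded for a general $f\in\calQ_p$. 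So ``Carleson-box splitting'' is not yet an argument.
\item You discarded dilations for the wrong reason: norm convergence $f_r\to f$ is not needed.
\end{itemize}

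Your route can in fact be closed. Dilations are uniformly bounded in $\calQ_p$ and $f-f_r\to0$ locally uniformly. Compactness (Lemma~\ref{20260606lem06}) therefore gives $\|C_\varphi(f-f_r)\|_{\calQ_p,*}\to0$ as $r\to1$. The normalized monomials $k^{-(1-p)/2}z^k$ are bounded in $\calQ_p$. Mere boundedness of $C_\varphi$ applied to them yields
\[
\sup_{a\in\D}\int_{\{|\varphi|>1-1/k\}}|\varphi'|^2g(\cdot,a)^p\,dA\lesssim k^{-1-p}.
\]
Then
\[
\sup_{a\in\D}\int_{\{|\ze|>\rho\}}|f'|^2\calN_{p,\varphi,a}\,dA\le 2\|C_\varphi(f-f_r)\|_{\calQ_p,*}^2+2\|f_r'\|_\infty^2\,\sup_{a\in\D}\int_{\{|\varphi|>\rho\}}|\varphi'|^2g(\cdot,a)^p\,dA,
\]
and you choose $r$ first, then $\rho$.

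The paper sidesteps the issue altogether. It dualizes the failure of the vanishing condition through Lemma~\ref{20260606lem01} and keeps only the $d_I^{(n)}$ on tents meeting $\{|\ze|>\rho_n\}$. It then uses the randomized Bergman-projection functions $f_n=\int_0^zB_pG_n$. Since $G_n$ is supported in $\{|\ze|>1-C_0(1-\rho_n)\}$ with $\|G_n\|_{L^2(dA_p)}\lesssim1$, one gets $|f_n'|\lesssim_K(1-\rho_n)^{(p+1)/2}$ on each compact $K$. The test functions are thus themselves locally uniformly null, and no limit function $f$ ever appears.
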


\vspace{0.1cm}

Finally, we summarize the main novelties of the present paper as follows.
\begin{enumerate}
    \item We replace pointwise estimates for counting functions by a dyadic
    trace-embedding formulation over Carleson tents.  This is essential in the
    range \(0<p<1\), where the \(p\)-Nevanlinna counting functions no longer
    enjoy the subharmonicity properties used in the classical BMOA theory.

    \item We introduce a dyadic $p$--capacity gauge \eqref{20260606eq04},
    which encodes the trace embedding through an equivalent dyadic capacity
    condition.  This converts the analytic embedding problem into a discrete
    extremal problem on the tree of Carleson tents.

    \item The proof of our main result Theorem \ref{20260606thm03} separates the upper and lower estimates in a way adapted
    to the analytic structure.  The upper estimate is obtained by deterministic
    non-analytic tent atoms together with the boundedness of the Bergman
    projection, while randomization is used only after projection to remove
    cross terms in the lower estimate.

    \item As an application of the dyadic trace theorem and its compact
    counterpart, we obtain boundedness and compactness criteria for the
    $\calQ_p$-Carleson measure problem for weights in the Littlewood--Paley
    class. This gives a large family of absolutely continuous measures for which the $\calQ_p$-Carleson measure problem can be reduced to the dyadic trace theorem treated in this paper.

\end{enumerate}

The rest of the paper is organized as follows.  In Section~\ref{Sec02}, we collect the
preliminary material needed for the proof of the dyadic trace embedding theorem.
This includes the dual formulation of the dyadic \(p\)--capacity gauge, the
boundedness of the weighted Bergman projection on \(\calT_p\), and several
auxiliary dyadic estimates over Carleson tents.  In Section~\ref{Sec03}, we prove
Theorem~\ref{20260606thm03}.
In Section~\ref{Sec04}, we derive the boundedness criterion for composition operators acting on $\calQ_p$, namely Theorem~\ref{20260606thm01}, from the trace embedding theorem and the
change-of-variables formula for the generalized \(p\)-Nevanlinna counting
functions.  In Section~\ref{Sec05}, we prove the compact trace embedding theorem
and then use its proof strategy to establish the compactness criterion for
\(C_\varphi\) on \(\calQ_p\), which gives Theorem~\ref{20260606thm02}.
Finally, in Section~\ref{Sec06}, we apply our main theorems to the $\calQ_p$-Carleson measure problem and obtain boundedness and compactness criteria for weights in the Littlewood--Paley class.

Throughout this paper, for $a ,b \in  \mathbb{R}$, $a\lesssim b$ means there exists a positive number $C$, which is independent of $a$ and $b$, such that $a\leq C\,b$. Moreover, if both $a \lesssim  b$ and $b\lesssim a$ hold, we say $a \simeq b$.
\\
\noindent{\bf Acknowledgement.}
The first author would like to thank his undergraduate advisor, Le Hai Khoi, for
bringing this problem to his attention around 2013, when the first author was
still a junior undergraduate student at Nanyang Technological University in Singapore.  Although the problem was not resolved at that time, this early exposure was one of the motivations for the present work. The authors would also like to thank Ruhan Zhao for his continued interest in
this problem and for many helpful comments and suggestions over the past decade. They also thank Jie Xiao for bringing to their attention several related works on $Q$-spaces in the Euclidean setting and for helpful comments and discussions. The first author was supported by the Simons Travel grant MPS-TSM-00007213.

\section{Preliminaries} \label{Sec02}

In this section, we collect several preliminary results needed for the proof of Theorem~\ref{20260606thm03}.

\subsection{Dual form of the dyadic $p$-capacity gauge}
We first prove a dual formulation for the dyadic \(p\)-capacity gauge defined in \eqref{20260606eq04}. The main result is stated as follows.

\begin{lem}\label{20260606lem01}
Let $h=\{h(I)\}_{I\in\calD}$ be nonnegative.  Then
\begin{equation}\label{20260606eq18}
\norm{h}_{\calB_p(\calD)}
=
\sup\left\{
        \sum_{I\in\calD}d_Ih(I):
        d_I\ge0,\quad
        \sup_{J\in\calD}\frac1{|J|^p}\sum_{I\subseteq J}d_I\le1
\right\}.
\end{equation}
\end{lem}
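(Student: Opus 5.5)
This is LP duality on the dyadic tree. The primal problem minimizes $\sum_J\lambda_J|J|^p$ subject to $\lambda\ge0$ and $(A\lambda)(I):=\sum_{J\supseteq I}\lambda_J\ge h(I)$ for all $I$. The formal dual maximizes $\sum_I d_I h(I)$ subject to $d\ge0$ and $(A^Td)(J)=\sum_{I\subseteq J}d_I\le |J|^p$ for all $J$. After the normalization $\frac1{|J|^p}\sum_{I\subseteq J}d_I\le1$, this is exactly the right-hand side of \eqref{20260606eq18}. So I would prove weak duality directly, and then strong duality by a finite-dimensional reduction followed by a limiting argument.

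\emph{Weak duality} ($\ge$). Take feasible $\lambda$ and $d$. All terms are nonnegative, so Tonelli allows interchanging sums:
\[
\sum_I d_I h(I)\le \sum_I d_I\sum_{J\supseteq I}\lambda_J=\sum_J\lambda_J\sum_{I\subseteq J}d_I\le\sum_J\lambda_J|J|^p .
\]
Taking the supremum over $d$ and the infimum over $\lambda$ gives $\text{RHS}\le\|h\|_{\calB_p(\calD)}$.

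\emph{Strong duality} ($\le$).
\begin{enumerate}
\item Fix a finite subtree. Let $\calD_N$ be the intervals of generation at most $N$, and let $h_N:=h\mathbf 1_{\calD_N}$. For each $N$ the primal and dual problems restricted to $\calD_N$ are finite LPs.
\item The primal restricted to $\calD_N$ is feasible, for example by taking $\lambda_J$ large on the top-level intervals. Its value is finite and nonnegative. Finite LP duality then shows the primal and dual values coincide. Call the common value $P_N=D_N$.
\item The dual value $D_N$ on $\calD_N$ is at most the full RHS. Indeed, extending a dual-feasible $d$ by zero stays feasible: for $J$ outside $\calD_N$, $\sum_{I\subseteq J}d_I$ involves only intervals of generation $\le N$ inside $J$. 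Those lie inside $J$ only if they are finer than $J$, which is impossible, so the sum is empty.
\end{enumerate}

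\emph{Passage to the limit.} It remains to show $\|h\|_{\calB_p(\calD)}\le\liminf_N P_N$ (monotonicity of $P_N$ in $N$ is clear). The key fact is that an optimal $\lambda^{(N)}$ can be taken supported on $\calD_N$, since adding mass to finer $J$ never helps constraints at coarser $I$. The coefficients are uniformly bounded: $\lambda^{(N)}_J\le P_N/|J|^p$, and we may assume $\sup_N P_N<\infty$, since otherwise there is nothing to prove. A diagonal subsequence then converges pointwise on the countable set $\calD$ to some $\lambda\ge0$. For each fixed $I$, the constraint $\sum_{J\supseteq I}\lambda^{(N)}_J\ge h(I)$ is a finite sum over the ancestors of $I$, since the chain of ancestors up to the top level is finite. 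So it passes to the limit. Fatou gives $\sum_J\lambda_J|J|^p\le\liminf P_N$.

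\emph{Main obstacle.} The delicate point is this limiting step, specifically the finiteness of each ancestor chain. On $\T$ this holds because the dyadic system has finitely many top intervals of length comparable to $1$. If $\calD$ allowed infinitely many ancestors, I would truncate at the top generation as well. The finite LP duality itself is standard, for instance via Farkas' lemma.
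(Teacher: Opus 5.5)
Your proposal is correct and follows the paper's route: truncate to the finite tree $\calD_N$, apply finite linear-programming duality there, and let $N\to\infty$. The paper justifies the limit only by noting that both truncated values increase in $N$; your separate weak-duality computation, together with the diagonal-subsequence and Fatou argument (using that each $I$ has finitely many ancestors), is exactly what identifies $\lim_N P_N$ with $\|h\|_{\calB_p(\calD)}$, so your version is if anything more complete.
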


\begin{proof}
We first prove the result on a finite dyadic tree.  For \(N\in\mathbb N\), set $\calD_N:=\{I\in\calD: 2^{-N} \le |I| \le 1\}$. Define the truncated dyadic $p$-capacity gauge for $h$ by 
\begin{equation} \label{20260606eqA1}
\norm{h}_{\calB_p(\calD_N)}
:=
\inf\left\{
        \sum_{J\in\calD_N}\lambda_J |J|^p:
        \lambda_J\ge0,\ 
        \sum_{I \subseteq J, \; J \in \calD_N}\lambda_J\ge h(I)
        \ \textnormal{for all } I\in\calD_N
\right\}.
\end{equation} 
Since \(\calD_N\) is finite, this allows us to treat \eqref{20260606eqA1} a finite-dimensional primal problem:
\begin{center}
minimize $\mathsf J\left(\left\{\lambda_J\right\}_{J \in \calD_N}\right):=\sum\limits_{J\in\calD_N}\lambda_J |J|^p$ \\
subject to the constraints $\eta_I \left(\left\{\lambda_J\right\}_{J \in \calD_N}\right):=h(I)- \sum\limits_{I \subseteq J, \; J \in \calD_N}\lambda_J \le 0, \quad I \in \calD_N$. 
\end{center}
Its primal variables are \(\{\lambda_J\}_{J\in\calD_N}\). Therefore, the generalized Lagrange multiplier associated with the constraint indexed by $I \in \calD_N$ is given by $d_I \ge 0$, and the associated Lagrangian is
\begin{align} \label{20260606A2}
L\left(\{\lambda_J\}_{J \in \calD_N}, \left\{d_J \right\}_{J \in \calD_N}  \right)
&=\textsf J \left( \left\{\lambda_J \right\}_{J \in \calD_N} \right)+\sum_{I \in \calD_N} d_I \eta_I \left( \left\{\lambda_J \right\}_{J \in \calD_N} \right)  \nonumber \\ 
&=
\sum_{J\in\calD_N}\lambda_J|J|^p
+\sum_{I\in\calD_N}
d_I\left(h(I)-\sum_{I \subseteq J, \; J \in \calD_N}\lambda_J\right)   \nonumber \\
&=
\sum_{I\in\calD_N}d_Ih(I)
+
\sum_{J\in\calD_N}\lambda_J
\left(
|J|^p-\sum_{I \subseteq J, \; I \in \calD_N}d_I
\right).
\end{align}
For each fixed choice of $\left\{d_J\right\}_{J \in \calD_N}$, taking the infimum in \eqref{20260606A2} over \(\lambda_J\ge0\), we see that this infimum is finite precisely when
\[
        \sum_{I \subseteq J, \; I \in \calD_N}d_I\le |J|^p,
        \qquad J\in\calD_N.
\]
Under this condition, the infimum in $\left\{\lambda_J\right\}_{J \in \calD_N}$ equals
\[
        \sum_{I\in\calD_N}d_Ih(I).
\]
Therefore, by standard finite-dimensional linear-programming duality (see, e.g., \cite{Vanderbei2020}),
\begin{equation}\label{20260606eq-finite-duality}
\norm{h}_{\calB_p(\calD_N)}
=
\sup\left\{
        \sum_{I\in\calD_N}d_Ih(I):
        d_I\ge0,\quad
        \sum_{I \subseteq J, \; I \in \calD_N}d_I\le |J|^p
        \ \textnormal{for all } J\in\calD_N
\right\}.
\end{equation}
Finally, the desired identity \eqref{20260606eq18} follows by letting \(N\to\infty\) in \eqref{20260606eq-finite-duality}, since both sides are monotone increasing in \(N\).
\end{proof}

\subsection{Bergman projection on $\calT_p$}

The second ingredient in the proof of Theorem~\ref{20260606thm03} is the boundedness of the Bergman projection on the tent spaces \(\calT_p\). To begin with, for $0<p \le 1$, recall that the weighted Bergman projection $B_p$ associated with the measure $dA_p(z):=(1-|z|^2)^pdA(z)$ is defined as follows: for any $G$ being a measurable function on $\D$, 
$$
B_pG(z):=\int_{\D}\frac{G(w)}{(1-z\overline w)^{2+p}}dA_p(w),  \qquad z \in \D. 
$$

We have the following result. 

\begin{lem}\label{20260606lem02}
For $0<p\le1$,
$$
\norm{B_pG}_{\calT_p}\lesssim\norm{G}_{\calT_p}.
$$
\end{lem}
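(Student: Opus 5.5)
Fix an arc $I\subseteq\T$ and split $G=G\chi_{Q_{2I}}+G\chi_{\D\setminus Q_{2I}}=:G_1+G_2$, where $2I$ is the concentric arc of twice the length (when $|I|$ is comparable to $1$ we simply take $Q_{2I}=\D$). The goal is
\[
\int_{Q_I}|B_pG(z)|^2(1-|z|^2)^p\,dA(z)\lesssim |I|^p\|G\|_{\calT_p}^2 ,
\]
and we estimate the two pieces separately.

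\emph{Local piece.} The kernel is the reproducing kernel of $A^2(dA_p)$, and $B_p$ is bounded on $L^2(dA_p)$. For $0<p\le1$ this follows from the Forelli--Rudin estimates, via a Schur test with test function $(1-|w|^2)^{-s}$ for small $s>0$. Hence
\[
\int_{Q_I}|B_pG_1|^2\,dA_p\le\int_{\D}|B_pG_1|^2\,dA_p\lesssim\int_{Q_{2I}}|G|^2\,dA_p\lesssim|I|^p\|G\|_{\calT_p}^2,
\]
where the last step covers $Q_{2I}$ by boundedly many tents of comparable size.

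\emph{Far piece.} For $z\in Q_I$ and $w\notin Q_{2I}$ we have $|1-z\bar w|\gtrsim|I|+\operatorname{dist}(w/|w|,I)+(1-|w|)$. Decompose $\D\setminus Q_{2I}$ dyadically into the regions $R_k:=Q_{2^{k+1}I}\setminus Q_{2^kI}$, $k\ge1$. On $R_k$ one has $|1-z\bar w|\gtrsim 2^k|I|$, and by Cauchy--Schwarz
\[
\int_{R_k}|G|(1-|w|^2)^p\,dA\le\Bigl(\int_{Q_{2^{k+1}I}}|G|^2\,dA_p\Bigr)^{1/2}\bigl(A_p(Q_{2^{k+1}I})\bigr)^{1/2}\lesssim (2^k|I|)^{p/2}\|G\|_{\calT_p}(2^k|I|)^{1+p/2}.
\]
So for every $z\in Q_I$,
\[
|B_pG_2(z)|\lesssim\sum_{k\ge1}(2^k|I|)^{-2-p}(2^k|I|)^{1+p}\|G\|_{\calT_p}=\sum_{k\ge1}(2^k|I|)^{-1}\|G\|_{\calT_p}\lesssim|I|^{-1}\|G\|_{\calT_p}.
\]
Integrating over $Q_I$ gives $|I|^{-2}\|G\|_{\calT_p}^2\,A_p(Q_I)\simeq|I|^{p}\|G\|_{\calT_p}^2$, which is the required bound. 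Note that the sum converges for every $p>0$: the decay $(2^k|I|)^{-1}$ comes from the exponent $2+p$ of the kernel exceeding $1+p$, the growth exponent of $\int_{Q_J}|G|\,dA_p$.

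\emph{Main obstacle.} The delicate points are the kernel lower bound and the Cauchy--Schwarz step on the far annuli. One must use the $p$-Carleson bound on the full tent $Q_{2^{k+1}I}$, not just on $R_k$, and check the exponent bookkeeping carefully. I expect this bookkeeping to work uniformly for $0<p\le1$. If the cruder pointwise bound loses too much, the fallback is to keep the decay $|1-z\bar w|^{-2-p}$ exactly and compare with the Poisson-type extension.
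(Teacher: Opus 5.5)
Your proposal is correct and follows the paper's proof essentially step for step. The paper uses the same local--far splitting into $Q_{2I}$ (the paper takes an arc of length about $4|J|$) and the shells $Q_{2^{k+1}I}\setminus Q_{2^kI}$, the $L^2(dA_p)$-boundedness of $B_p$ for the local piece, and Cauchy--Schwarz on each shell against the $p$-Carleson bound for the full tent, which gives $|B_p(G\chi_{R_k})(z)|\lesssim (2^k|I|)^{-1}\|G\|_{\calT_p}$ on $Q_I$. Two differences are purely cosmetic: you sum the pointwise bounds before integrating, while the paper sums the $L^2(Q_J,dA_p)$ norms; and the paper writes $\min\{2^n|J|,1\}$ to handle shells once the enlarged arc reaches $\T$, a case your remark that $Q_{2^kI}=\D$ eventually also covers. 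Your exponent bookkeeping closes, so the hedge in your last paragraph is unnecessary.
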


\begin{proof}
The proof of Lemma~\ref{20260606lem02} is routine, and the result is likely implicit in the literature. For the reader's convenience, we include the details. Without loss of generality, we may assume \(\left\|G\right\|_{\calT_p}=1\). For any $J \subseteq \T$, our goal is to show that
\begin{equation}\label{20260606eq21}
        \int_{Q_J}|B_pG(z)|^2\,dA_p(z)\lesssim |J|^p.
\end{equation}
To prove \eqref{20260606eq21}, we decompose \(G\) dyadically and carry out a ``local--far" analysis. This idea goes back to the seminal work of Fefferman and Stein~\cite{FS1972}, where they proved the conformal invariance of BMO on \(\D\); see also Garsia's presentation~\cite{Garsia1971} and Girela's lecture notes~\cite[Theorem~3.1]{Girela1999}. We now turn to the details.

\vspace{0.1cm}

Let \(J^{(n)}\subseteq\T\) be the arc with the same center as \(J\) and with length
\[
        |J^{(n)}|\simeq \min \{2^n|J|,1\}.
\]
Once \(J^{(n)}=\T\), we keep \(J^{(n)}\) equal to \(\T\) for all larger \(n\). Write
\[
        G_0=G\one_{Q_{J^{(2)}}},
        \qquad
        G_n=G\one_{Q_{J^{(n+1)}}\setminus Q_{J^{(n)}}},\quad n\ge2,
\]
and therefore, 
\begin{equation} \label{20260606A4}
B_pG(z)=B_pG_0(z)+\sum_{n \ge 2} B_pG_n(z), \qquad z \in \D. 
\end{equation} 

The local part is immediate from the \(L^2(dA_p)\)-boundedness of \(B_p\):
\begin{align*} 
\int_{Q_J}|B_pG_0(z)|^2\,dA_p(z)
&\le \int_{\D}|B_pG_0(z)|^2\,dA_p(z)  \nonumber \\
& \lesssim \int_{Q_{J^{(2)}}}|G(z)|^2\,dA_p(z) \nonumber \\
& =\left|J^{(2)} \right|^p \cdot \frac{1}{\left|J^{(2)}\right|^p } \int_{Q_{J^{(2)}}} |G(z)|^2\,dA_p(z) \nonumber \\
&\lesssim |J|^p,
\end{align*}
where in the last estimate above, we used the assumption $\left\|G \right\|_{\calT_p}=1$. 

We next treat the far part. Since
$$
        |1-z\overline w|
        \simeq
        (1-|z|)+(1-|w|)+\left|\frac{z}{|z|}-\frac{w}{|w|}\right|, \qquad z, w \in \D \backslash \{0\}
$$
then if $z \in Q_J$ and $w \in Q_{J^{(n+1)}} \backslash Q_{J^{(n)}}$, one has
$$
        |1-z\overline w|\gtrsim \min \{2^n|J|,1\}.
$$
By Cauchy--Schwarz, 
\begin{align} \label{20260606eqA3}
        |B_pG_n(z)|
        & \le \int_{Q_{J^{(n+1)}}\setminus Q_{J^{(n)}}} \frac{|G(w)|}{\left| 1-z \overline{w} \right|^{2+p}}dA_p(w) \nonumber  \\
        &\lesssim  \frac{1}{\left(\min \{2^n|J|,1\}\right)^{2+p}}\int_{Q_{J^{(n+1)}}}|G(w)|\,dA_p(w)\nonumber \\
        &\le \frac{1}{\left(\min \{2^n|J|,1\}\right)^{2+p}}
        \left(\int_{Q_{J^{(n+1)}}}|G(z)|^2\,dA_p(z)\right)^{1/2}
        A_p(Q_{J^{(n+1)}})^{1/2}.
\end{align}
Since \(\left\|G\right\|_{\calT_p}=1\), we have
$$
        \int_{Q_{J^{(n+1)}}}|G(z)|^2\,dA_p(z)\lesssim \left|J^{(n+1)} \right|^p \simeq \left(\min \{2^n|J|,1\}\right)^p
$$
        and moreover, 
$$
A_p(Q_{J^{(n+1)}})=\int_{Q_{J^{(n+1)}}} (1-|z|^2)^pdA(z) \lesssim \left(\min \{2^n|J|,1\}\right)^{p+2}.
$$
Plugging the above estimates back to \eqref{20260606eqA3}, we deduce that  
$$
        |B_pG_n(z)|\lesssim \frac{1}{\min \{2^n|J|,1 \}}, \qquad z\in Q_J.
$$
Therefore, if $2^n|J|<1$, then
\begin{align*}
\left\|B_pG_n\right\|_{L^2(Q_J,dA_p)}
&\lesssim (2^n|J|)^{-1}A_p(Q_J)^{1/2} \\
&\simeq  (2^n|J|)^{-1} \cdot |J|^{\frac{p+2}{2}} \\
&= 2^{-n}|J|^{p/2}.
\end{align*}
If $2^n|J| \ge 1$, then there are $O(1)$ many terms contained in the sum \eqref{20260606A4}, which satisfies the estimate
$$
\left\|B_pG_n\right\|_{L^2(Q_J,dA_p)} \lesssim A_p(Q_J)^{1/2} \simeq |J|^{\frac{p+2}{2}} \le |J|^{\frac{p}{2}}. 
$$
Hence
\begin{align*}
\int_{Q_J}\left| \sum_{n \ge 2} B_pG_n(z) \right |^2\,dA_p(z)  
& \le \left[\sum_{n\ge2}\left\|B_pG_n\right\|_{L^2(Q_J,dA_p)} \right]^2 \\
& \lesssim |J|^p,
\end{align*}
which finishes the proof of the far part.

The desired conclusion follows by combining the local and far estimates.
\end{proof}

\subsection{More dyadic analysis on Carleson tents}

We still need two auxiliary results before proving Theorem~\ref{20260606thm03}. The first result is a further decomposition of upper Carleson tents. Let \(\eta>0\) be sufficiently small. For each \(I\in\calD\), choose a finite partition
\begin{equation} \label{20260606C1}
Q_I^{\textnormal{up}}=\bigcup_{\nu=1}^{N_0}Q_{I,\nu}
\end{equation} 
where \(N_0=N_0(\eta)\) is independent of \(I\), each \(Q_{I,\nu}\) is a ``truncated" Whitney-type tent with Euclidean diameter at most \(\eta |I|\), and
$$
        A_p(Q_{I,\nu})\simeq |I|^{p+2}.
$$
Here the implicit constants may depend on \(\eta\), which is fixed once and for all. We choose \(\eta\) sufficiently small (depending on $p$) so that, on each product \(Q_{I,\nu}\times Q_{I,\nu}\), the Bergman kernel has essentially constant argument.

Define the $L^2(dA_p)$--normalized atom
\begin{equation} \label{2026060eqC2}
        b_{I,\nu}:=A_p(Q_{I,\nu})^{-1/2}\one_{Q_{I,\nu}}.
\end{equation} 
Then \(\norm{b_{I,\nu}}_{L^2(dA_p)}=1\).

\begin{lem}\label{20260606lem03}
For every \(I\in\calD\) and every \(1\le\nu\le N_0\),
\begin{equation}\label{20260606eq28}
        |B_pb_{I,\nu}(z)|\gtrsim |I|^{-(p+2)/2},
        \qquad z\in Q_{I,\nu}.
\end{equation}
\end{lem}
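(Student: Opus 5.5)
The plan is to write $B_pb_{I,\nu}(z)$ explicitly and show that its integrand has essentially constant argument, so no cancellation can occur. For $z\in Q_{I,\nu}$ we have
\[
B_pb_{I,\nu}(z)=A_p(Q_{I,\nu})^{-1/2}\int_{Q_{I,\nu}}\frac{(1-|w|^2)^p}{(1-z\overline w)^{2+p}}\,dA(w).
\]
We will show two things: the modulus of the kernel is comparable to $|I|^{-(2+p)}$ on $Q_{I,\nu}\times Q_{I,\nu}$, and its argument varies by less than, say, $\pi/4$ there.

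\emph{Modulus.} Since $Q_{I,\nu}\subseteq Q_I^{\mathrm{up}}$, every point of $Q_{I,\nu}$ satisfies $1-|z|\simeq|I|$. For $z,w\in Q_I^{\mathrm{up}}$, the estimate $|1-z\overline w|\simeq(1-|z|)+(1-|w|)+|z/|z|-w/|w||$ used in the proof of Lemma~\ref{20260606lem02} gives $|1-z\overline w|\simeq|I|$. Hence
\[
|1-z\overline w|^{-(2+p)}\simeq|I|^{-(2+p)}.
\]

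\emph{Argument.} Fix $w_0\in Q_{I,\nu}$. For $z,w\in Q_{I,\nu}$, write
\[
1-z\overline w=(1-z\overline{w_0})\Bigl(1+\tfrac{z(\overline{w_0}-\overline w)}{1-z\overline{w_0}}\Bigr).
\]
The correction term has modulus at most $|w-w_0|/|1-z\overline{w_0}|\lesssim\eta|I|/|I|=C\eta$, using that the diameter of $Q_{I,\nu}$ is at most $\eta|I|$. We also need $\arg(1-z\overline{w_0})$ to be nearly constant in $z$. Apply the same factorization in $z$ around a fixed $z_0=w_0$:
\[
1-z\overline{w_0}=(1-|w_0|^2)\bigl(1+O(\eta)\bigr),
\]
which has argument $O(\eta)$ because $1-|w_0|^2$ is positive. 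Thus $\arg(1-z\overline w)=O(\eta)$, so $\arg(1-z\overline w)^{-(2+p)}=O((2+p)\eta)\le O(3\eta)$. Choosing $\eta$ small, as the paper permits, makes this at most $\pi/4$.

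\emph{Conclusion.} With the argument controlled, the real part of the kernel is at least $\cos(\pi/4)$ times its modulus. Therefore
\[
|B_pb_{I,\nu}(z)|\ge \operatorname{Re}B_pb_{I,\nu}(z)\gtrsim A_p(Q_{I,\nu})^{-1/2}\,|I|^{-(2+p)}\,A_p(Q_{I,\nu}).
\]
Since $A_p(Q_{I,\nu})\simeq|I|^{p+2}$, the right-hand side is $\simeq|I|^{(p+2)/2}|I|^{-(p+2)}=|I|^{-(p+2)/2}$, which is \eqref{20260606eq28}.

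The main point of care is the argument control. It must be uniform in $I$, which follows because all bounds are scale-invariant ratios of $\eta|I|$ to $|I|$. It also requires handling the branch of the power $(\cdot)^{-(2+p)}$: we take the principal branch, which is legitimate because $\operatorname{Re}(1-z\overline w)>0$ on $\D\times\D$. The implicit constants depend on $\eta$ and $p$ only.
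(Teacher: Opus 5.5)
Your proof is correct and follows essentially the same route as the paper's. Both arguments use $|1-z\overline w|\simeq|I|$ on $Q_{I,\nu}$ together with the perturbation $\frac{1-z\overline w}{1-z\overline{w_0}}=1+O(\eta)$ to rule out cancellation in the kernel integral. The only difference is cosmetic: you also compare $1-z\overline{w_0}$ with the positive number $1-|w_0|^2$, which pins the kernel's argument near $0$ and lets you take real parts. The paper only uses that, for each fixed $z$, the arguments of $(1-z\overline w)^{-2-p}$ over $w\in Q_{I,\nu}$ lie in a short arc.
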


\begin{proof}
Fix \(z\in Q_{I,\nu}\). Since \( Q_{I,\nu} \subset Q_I^{\textnormal{up}}\), we have for every \(w\in Q_{I,\nu}\),
\[
        |1-z\overline w|\simeq |I|.
\]
Moreover, for any $w, w_0 \in Q_{I,\nu}$, one has 
\[
        \left|
        \frac{1-z\overline w}{1-z\overline {w_0}}-1
        \right|
        =
        \frac{|z|\,|w-w_0|}{|1-z\overline {w_0}|}
        \lesssim
        \frac{\operatorname{diam}(Q_{I,\nu})}{|I|}
        \lesssim \eta .
\]
Choosing \(\eta>0\) sufficiently small, it follows that the arguments of
\[
        (1-z\overline w)^{-2-p},\qquad w\in Q_{I,\nu},
\]
are contained in an interval of length at most \(\pi/10\). Hence, 
\[
        \left|
        \int_{Q_{I,\nu}} \frac{1}{(1-z\overline w)^{2+p}}\,dA_p(w)
        \right|
        \gtrsim
        \int_{Q_{I,\nu}} \frac{1}{|1-z\overline w|^{2+p}}\,dA_p(w).
\]
This further gives
\begin{align*}
        |B_pb_{I,\nu}(z)|
        &=
        A_p(Q_{I,\nu})^{-1/2}
        \left|
        \int_{Q_{I,\nu}} \frac{1}{(1-z\overline w)^{2+p}}\,dA_p(w)
        \right|  \\
        &\gtrsim
        A_p(Q_{I,\nu})^{-1/2}
        \int_{Q_{I,\nu}} \frac{1}{|1-z\overline w|^{2+p}}\,dA_p(w) \\
        &\simeq
        A_p(Q_{I,\nu})^{-1/2}|I|^{-2-p}A_p( Q_{I,\nu}) \\
        &=
        A_p(Q_{I,\nu})^{1/2}|I|^{-2-p}
        \simeq
        |I|^{(p+2)/2}|I|^{-2-p} \\
        &=
        |I|^{-(p+2)/2}.
\end{align*}
This proves \eqref{20260606eq28}.
\end{proof}

We also need the following packing lemma, which allows us to pass from a dyadic packing condition to its continuous analogue.

\begin{lem}\label{20260606lem04}
Let \(\{d_I\}_{I\in\calD}\) be a nonnegative sequence satisfying
\begin{equation}\label{20260606eq29}
        \sup_{J\in\calD}\frac1{|J|^p}\sum_{I\subseteq J}d_I\le1.
\end{equation}
Let \(K\subseteq\T\) be an arbitrary arc, and let
\[
        \mathcal I(K):=\{I\in\calD:Q_I^{\textnormal{up}}\cap Q_K\ne\varnothing\}.
\]
Then
\begin{equation}\label{20260606eq30}
        \sum_{I\in\mathcal I(K)}d_I\lesssim |K|^p.
\end{equation}
\end{lem}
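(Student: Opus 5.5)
The plan is to cover the family $\mathcal I(K)$ by a bounded number of dyadic trees rooted at arcs comparable to $K$, and then apply the packing hypothesis \eqref{20260606eq29} to each root.

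\textbf{Step 1 (the intersection forces $|I|\lesssim|K|$ and $I$ near $K$).} Suppose $Q_I^{\textnormal{up}}\cap Q_K\ne\varnothing$, and pick $z=re^{it}$ in the intersection. Since $z\in Q_I^{\textnormal{up}}$ we have $1-r>|I|/2$, and since $z\in Q_K$ we have $1-r\le|K|$. Hence $|I|<2|K|$. Moreover $e^{it}\in I\cap K$, so $I$ meets $K$. Because $|I|<2|K|$, it follows that $I\subseteq 5K$, the arc concentric with $K$ of five times its length (or all of $\T$, if $5|K|\ge 1$).

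\textbf{Step 2 (covering by dyadic arcs of comparable size).} We may assume $|K|$ is small, since otherwise $|K|\simeq1$ and we argue with the top-level arcs of $\calD$. Let $k$ be such that the dyadic arcs in $\calD$ of length $2^{-k}$ satisfy $2|K|\le 2^{-k}<4|K|$.

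Every $I\in\mathcal I(K)$ has $|I|<2|K|\le 2^{-k}$. Since $I$ is dyadic, it is contained in a unique dyadic arc $J$ of length $2^{-k}$. That $J$ meets $5K$, and there are at most $O(1)$ dyadic arcs of length $2^{-k}\simeq|K|$ meeting $5K$; call them $J_1,\dots,J_M$ with $M\lesssim1$. Therefore
\[
\sum_{I\in\mathcal I(K)}d_I\le\sum_{m=1}^{M}\sum_{I\subseteq J_m}d_I\le\sum_{m=1}^M|J_m|^p\lesssim|K|^p,
\]
where the middle inequality is \eqref{20260606eq29} applied to each $J_m$.

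Two points need checking. First, the length normalization of dyadic arcs ($|\T|=1$ versus $2\pi$) only changes constants. Second, when $|K|$ is comparable to $1$, we take $J_m$ to be the top-generation arcs of $\calD$, of which there are boundedly many; if $\T$ itself belongs to $\calD$, we simply use $J=\T$.

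I do not expect a genuine obstacle here. The only subtle point is Step 1, namely that the \emph{upper} half tent, rather than the full tent $Q_I$, is what forces $|I|\lesssim|K|$. With full tents, arbitrarily large $I$ would meet $Q_K$, and the sum could not be controlled.
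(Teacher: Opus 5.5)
Your proof is correct, and it takes a different, somewhat shorter route than the paper.

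\textbf{The paper's route.} After observing $|I|\lesssim|K|$, the paper splits $\mathcal I(K)$ into the arcs with $I\subseteq K$ and those with $I\not\subseteq K$.
\begin{enumerate}
\item[(a)] The arcs with $I\subseteq K$ are grouped under the maximal dyadic subintervals of $K$. These form two Whitney chains plus a central arc, with geometrically decreasing lengths, so $\sum_{J\in\mathcal M(K)}|J|^p\lesssim|K|^p$.
\item[(b)] The arcs with $I\not\subseteq K$ must touch an endpoint of $K$, so there are at most two per scale. They are summed using only the trivial bound $d_I\le|I|^p$ and a geometric series.
\end{enumerate}

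\textbf{Your route.} You go \emph{up} the tree instead of down. Every $I\in\mathcal I(K)$ satisfies $|I|<2|K|$ and meets $K$. Hence it lies in one of the at most two generation-$k$ arcs with $2|K|\le 2^{-k}<4|K|$ that meet $K$. A single application of the packing hypothesis to each of these ancestors finishes the proof.

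\textbf{Comparison.} Your argument avoids the Whitney-chain structure of $\mathcal M(K)$ and the separate endpoint count, and needs no geometric series. It also gives an explicit constant, roughly $2\cdot 4^p$. It uses only the nestedness of the dyadic system, which the paper relies on implicitly as well. The paper's decomposition keeps the main term localized inside $K$, testing the packing condition only on subarcs of $K$, but this extra locality is not needed for the lemma as stated.

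\textbf{Smaller remarks.} Your Step 1 supplies the justification for $|I|\lesssim|K|$, namely $|I|/2<1-r\le|K|$, which the paper only asserts. The containment $I\subseteq 5K$ is not actually needed, since the ancestor $J_m\supseteq I$ already meets $K$.
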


\begin{proof}
Observe that if \(Q_I^{\textnormal{up}}\cap Q_K\ne\varnothing\), then \(|I|\lesssim |K|\). We consider two different cases. 

We first consider those \(I\in\mathcal I(K)\) with \(I\subseteq K\). Let \(\mathcal M(K)=\{J\} \) denote the collection of maximal dyadic subintervals of \(K\). Then the intervals in \(\mathcal M(K)\) are pairwise disjoint, and every dyadic interval \(I\subseteq K\) is contained in a unique \( J \in\mathcal M(K)\). Moreover, by\eqref{20260606eq29},
\[
        \sum_{I\subseteq J}d_I\le |J|^p.
\]
The maximal dyadic subintervals of an arbitrary arc form at most two Whitney chains (around two endpoints) plus one central interval (see, Figure \ref{Fig1}). Their lengths decrease geometrically; hence
\[
       \sum_{I \subseteq K, \; I \in \calI(K)} d_I=\sum_{J \in \calM(K)} \sum_{I \subseteq J}d_I \le  \sum_{J \in \calM(K)} |J|^p\lesssim |K|^p.
\]

\begin{figure}[ht]
\centering
\begin{tikzpicture}[x=0.75cm,y=0.75cm, scale=1.5pt, font=\small]

\draw[line width=0.8pt] (1.5,5) -- (14.5,5);
\node[above] at (3.35,5) {$\mathbb T$};

\node at (3.45,4.55) {\(\cdots\)};
\node at (11.55,4.55) {\(\cdots\)};

\draw[blue,line width=1.1pt] (2.7,4.55) -- (13.35,4.55);
\draw[blue,line width=1.1pt] (2.7,4.55) -- (2.7,5.2);
\draw[blue,line width=1.1pt] (13.35,4.55) -- (13.35,5.2);
\node[blue] at (8.0,5.35) {\(K\)};

\draw[line width=0.9pt] (2.7,2.4) rectangle (13.35,4.4);

\filldraw[fill=blue!8,draw=black] (2.7,2.4) rectangle (2.95,4.4);
\filldraw[fill=blue!8,draw=black] (2.95,2.4) rectangle (3.35,4.4);
\filldraw[fill=blue!8,draw=black] (3.35,2.4) rectangle (3.90,4.4);
\filldraw[fill=blue!8,draw=black] (3.90,2.4) rectangle (5.00,4.4);
\filldraw[fill=blue!8,draw=black] (5.00,2.4) rectangle (6.65,4.4);

\filldraw[fill=green!10,draw=black] (6.65,2.4) rectangle (9.40,4.4);

\filldraw[fill=red!8,draw=black] (9.40,2.4) rectangle (11.10,4.4);
\filldraw[fill=red!8,draw=black] (11.10,2.4) rectangle (12.30,4.4);
\filldraw[fill=red!8,draw=black] (12.30,2.4) rectangle (12.75,4.4);
\filldraw[fill=red!8,draw=black] (12.75,2.4) rectangle (13.10,4.4);
\filldraw[fill=red!8,draw=black] (13.10,2.4) rectangle (13.35,4.4);


\node at (4.45,3.55) {\(M_2^L\)};
\node at (5.82,3.55) {\(M_1^L\)};
\node at (10.25,3.55) {\(M_1^R\)};
\node at (11.70,3.55) {\(M_2^R\)};

\node at (2.15,3.55) {\(\cdots\)};
\node at (13.95,3.55) {\(\cdots\)};

\draw[decorate,decoration={brace,amplitude=6pt},blue,line width=0.8pt]
(2.55,2.0) -- (6.55,2.0);
\node[blue] at (4.55,1.25) {left Whitney chain};

\draw[decorate,decoration={brace,amplitude=6pt},green!60!black,line width=0.8pt]
(6.72,2.0) -- (9.33,2.0);
\node[green!60!black] at (8.03,1.25) {central interval};
\node[green!60!black] at (8.03,0.75) {(possibly none)};

\draw[decorate,decoration={brace,amplitude=6pt},red,line width=0.8pt]
(9.50,2.0) -- (13.45,2.0);
\node[red] at (11.45,1.25) {right Whitney chain};

\node at (4.55,0.15) {\(\lvert M_j^L\rvert \simeq 2^{-j}\lvert K\rvert\)};
\node at (4.55,-0.45) {lengths decrease geometrically};
\node at (4.55,-0.95) {toward the left endpoint};

\node at (11.45,0.15) {\(\lvert M_j^R\rvert \simeq 2^{-j}\lvert K\rvert\)};
\node at (11.45,-0.45) {lengths decrease geometrically};
\node at (11.45,-0.95) {toward the right endpoint};

\end{tikzpicture}
\caption{Maximal dyadic subintervals of an arbitrary arc \(K\).}
\label{Fig1}
\end{figure}

Next, we consider those \(I\in\mathcal I(K)\) with \(I\not\subseteq K\). Such an interval must meet one of the two endpoints of \(K\), and hence, at each dyadic scale, there are at most two such intervals. Moreover, all such intervals satisfy \(|I|\lesssim |K|\). By \eqref{20260606eq29}, we have \(d_I\le |I|^p\), and therefore their total contribution is bounded by
\[
        \sum_{m\ge0}(2^{-m}|K|)^p\lesssim |K|^p.
\]
This proves \eqref{20260606eq30}.
\end{proof}

\section{Proof of  Theorem~\ref{20260606thm03}}\label{Sec03}

\subsection{Sufficiency}

Assume that there exists a dyadic system \(\calD\) on \(\T\) such that $\norm{h_W}_{\calB_p(\calD)}<\infty$. Let \(F\in \calQ_p\) and fix any dyadic system $\calD$ on $\T$. Since $\{Q_I^{\textnormal{up}}\}_{I \in \calD}$ gives a disjoint partition of $\D$, we have
\begin{equation} \label{20260606eqB1}
\int_{\D}|F'(z)|^2W(z)\,dA(z)=\sum_{I\in\calD}\int_{Q_I^{\textnormal{up}}}|F'(z)|^2W(z)\,dA(z).
\end{equation} 
By the sub-mean value property for holomorphic functions (see, e.g., \cite[Lemma 2.24]{Zhu2005}), 
\begin{align} \label{20260606eqB2}
\int_{Q_I^{\textnormal{up}}}|F'(z)|^2W(z)\,dA(z)
&\le \left(\sup_{z \in Q_I^{\textnormal{up}}}|F'(z)|^2 \right) \cdot 
\int_{Q_I^{\textnormal{up}}}W(z)\,dA(z) \nonumber  \\
&\lesssim \left(\frac1{|I|^{p+2}} \int_{\widetilde Q_I^{\textnormal{up}}}|F'(z)|^2\,dA_p(z)\right)
\cdot \left(\int_{Q_I^{\textnormal{up}}}W(z)\,dA(z) \right)  \nonumber  \\
&=\left( \int_{\widetilde Q_I^{\textnormal{up}}}|F'(z)|^2\,dA_p(z)\right)
\cdot \left(\frac1{|I|^{p+2}} \int_{Q_I^{\textnormal{up}}}W(z)\,dA(z) \right) \nonumber  \\
&=h_W(I)d_I(F),
\end{align}
where \(\widetilde Q_I^{\textnormal{up}}\) denotes a fixed slight enlargement\footnote{For example, \(\widetilde Q_I^{\textnormal{up}}\) may be chosen with the same angular and radial centers as \(Q_I^{\textnormal{up}}\), and with both angular length and radial thickness enlarged by a fixed factor \(c_0>1\), say \(c_0=1.1\). The precise choice is immaterial as long as it is fixed uniformly in \(I\).} of \(Q_I^{\textnormal{up}}\), and
\[
        d_I(F):=\int_{\widetilde Q_I^{\textnormal{up}}}|F'(z)|^2\,dA_p(z).
\]
Observe that the enlarged tents in $\{\widetilde Q_I^{\textnormal{up}}\}_{I \in \calD}$ have finite overlap. Therefore, for every \(J\in\calD\), by \eqref{20260606eq01}, one has
\begin{align} \label{20260606eqB3}
\sum_{I\subseteq J}d_I(F)
& \lesssim \int_{\widetilde Q_J} |F'(z)|^2\,dA_p(z) \nonumber \\
& =|J|^p \cdot \frac{1}{|J|^p} \int_{\widetilde Q_J} |F'(z)|^2\,dA_p(z) \nonumber \\
& \lesssim |J|^p\norm{F}_{\calQ_p,*}^2,
\end{align}
where $\widetilde Q_J:=\bigcup_{I \subseteq J, \; I \in \calD} \widetilde Q_I^{\textnormal{up}}$. By \eqref{20260606eqB1}, \eqref{20260606eqB2}, and Lemma~\ref{20260606lem01} with the constraint
$$
\sup_{J \in \calD} \frac{1}{|J|^p} \left( \sum_{I \subseteq J} \frac{d_I(F)}{\left\|F \right\|^2_{\calQ_p, *}} \right) \lesssim 1
$$
(which is guaranteed by \eqref{20260606eqB3}), 
we have
\begin{align*}
\int_{\D} |F'(z)|^2W(z)dA(z)
& \lesssim \sum_{I \in \calD} h_W(I)d_I(F) \\
&   \lesssim      \norm{h_W}_{\calB_p(\calD)}\norm{F}_{\calQ_p,*}^2.
\end{align*}
This proves \eqref{20260606eq15}.

\subsection{Necessity}

Assume that \eqref{20260606eq15} holds with best constant \(M\). By Lemma~\ref{20260606lem01}, it is enough to show that for any dyadic system $\calD$ on $\T$ and any finitely supported non-negative sequence \(\{d_I\}_{I\in\calD}\) satisfying 
$$
\sup_{J\in\calD}\frac1{|J|^p}\sum_{I\subseteq J}d_I\le1,
$$
one has
\begin{equation}\label{20260606eq31}
        \sum_{I\in\calD}d_Ih_W(I)\lesssim M.
\end{equation}
Let \(\varepsilon=\{\eps_{I,\nu}\}_{\substack{I \in \calD \\ 1 \le \nu \le N_0}}\) be independent Rademacher random variables and set
\begin{equation}\label{20260606eq32}
        G_\eps(w):=\sum_{I \in \calD} \sum_{\nu=1}^{N_0} \eps_{I,\nu}d_I^{1/2}b_{I,\nu}(w), \qquad w \in \D. 
\end{equation}
Here, recall that $N_0 \in \N$ is defined as in \eqref{20260606C1} and $b_{I,\nu}$ is the $L^2(dA_p)$--normalized atom associated to the ``truncated" Whitney-type tent $Q_{I, \nu}$, defined as in \eqref{20260606C1}. Since $\{d_I\}_{I \in \calD}$ is finitely supported, the sum \eqref{20260606eq32} is finite. Moreover, since the boxes \(Q_{I,\nu}\) are pairwise disjoint, 
$$
|G_\eps(w)|^2=
\sum_{I \in \calD} \sum_{\nu=1}^{N_0} d_I|b_{I,\nu}(w)|^2, \qquad w \in \D.
$$
Therefore, for any arc \(K \subseteq \T\),
\begin{align*}
        \int_{Q_K}|G_\eps(w)|^2\,dA_p(w)
        &\le
        \sum_{I \in \calD, \; Q_I^{\textnormal{up}}\cap Q_K\ne\varnothing}
        \sum_{\nu=1}^{N_0}
        d_I\int_{Q_{I,\nu}}|b_{I,\nu}(w)|^2\,dA_p(w) \\
        &\lesssim
        \sum_{I\in\mathcal I(K)}d_I
        \lesssim
        |K|^p,
\end{align*}
where the last estimate above follows from Lemma~\ref{20260606lem04}. Hence
$$
\norm{G_\eps}_{\calT_p}^2\lesssim 1 \qquad\text{uniformly in }\eps.
        $$
By Lemma~\ref{20260606lem02},
$$
\norm{B_pG_\eps}_{\calT_p}^2\lesssim1 \qquad\text{uniformly in }\eps.
$$
Define
$$
F_\eps(z):=\int_0^z B_pG_\eps(\xi)\,d\xi, \qquad z \in \D. 
$$
Then \(F_\eps\in \calQ_p\) and
\begin{equation}\label{20260606eq35}
        \norm{F_\eps}_{\calQ_p,*}^2
        \simeq
        \norm{B_pG_\eps}_{\calT_p}^2
        \lesssim1,
\end{equation}
where we have used \eqref{20260606D1}. Applying \eqref{20260606eq15} to \(F_\eps\) gives
\begin{equation}\label{20260606eq36}
\int_{\D} |F'_\varepsilon(z)|^2W(z)dA(z)=\int_{\D}|B_pG_\eps(z)|^2W(z)\,dA(z)\lesssim M\norm{F_\eps}_{\calQ_p,*}^2 \lesssim M
\end{equation}
for every choice of signs. Taking expectation and using the assumption that \(\varepsilon=\{\eps_{I,\nu}\}_{\substack{I \in \calD \\ 1 \le \nu \le N_0}}\) are independent Rademacher random variables, 
\begin{equation}\label{20260606eq37}
        \mathbb E_\eps |B_pG_\eps(z)|^2
        =
        \sum_{I \in \calD} \sum_{\nu=1}^{N_0} d_I|B_pb_{I,\nu}(z)|^2.
\end{equation}
Thus, by \eqref{20260606eq36} and \eqref{20260606eq37}
\begin{align*}
        M
        & =\mathbb E_\eps M \gtrsim \mathbb E_\eps \left(\int_{\D}|B_pG_\eps(z)|^2W(z)\,dA(z) \right) \\
        &=
       \sum_{I \in \calD} \sum_{\nu=1}^{N_0}d_I
        \int_{\D}|B_pb_{I,\nu}(z)|^2W(z)\,dA(z) \\
        &\ge
        \sum_{I \in \calD} \sum_{\nu=1}^{N_0}d_I
        \int_{Q_{I,\nu}}|B_pb_{I,\nu}(z)|^2W(z)\,dA(z).
\end{align*}
By Lemma~\ref{20260606lem03},
\[
        \int_{Q_{I,\nu}}|B_pb_{I,\nu}(z)|^2W(z)\,dA(z)
        \gtrsim
    \frac{1}{|I|^{p+2}}\int_{Q_{I,\nu}}W(z)\,dA(z).
\]
Summing in \(\nu\) gives
\begin{equation} \label{20260606eqG1}
        M
        \gtrsim
        \sum_{I \in \calD} \frac{d_I}{|I|^{p+2}}
        \int_{Q_I^{\textnormal{up}}}W(z)\,dA(z)
        =
        \sum_I d_Ih_W(I).
\end{equation} 
This proves \eqref{20260606eq31}, and the proof of Theorem~\ref{20260606thm03} is complete.

\medskip 

\section{Proof of Theorem \ref{20260606thm01}: boundedness} \label{Sec04}

This section is devoted to proving Theorem \ref{20260606thm01}. First, we recall the following change variable formula.  

\begin{lem}\label{20260606lem05}
For any $f \in H(\D)$, any analytic self-map $\varphi$ of $\D$, and any $a\in\D$,
\begin{equation} \label{20260606eqE2}
        \int_{\D}|(f\circ\varphi)'(z)|^2g(z,a)^pdA(z)
        =
        \int_{\D}|f'(\ze)|^2\calN_{p,\varphi,a}(\ze) dA(\ze).
\end{equation} 
\end{lem}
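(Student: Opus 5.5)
The plan is the classical area formula (non-injective change of variables) for the analytic map $\varphi$, applied to a suitable nonnegative integrand. Fix $a\in\D$ and $f\in H(\D)$. By the chain rule, $|(f\circ\varphi)'(z)|^2=|f'(\varphi(z))|^2|\varphi'(z)|^2$. The left side of \eqref{20260606eqE2} is therefore
\[
\int_{\D}|f'(\varphi(z))|^2\, g(z,a)^p\,|\varphi'(z)|^2\,dA(z).
\]
Here $|\varphi'(z)|^2$ is exactly the real Jacobian of $\varphi$ viewed as a map $\mathbb R^2\to\mathbb R^2$.

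The key tool is the area formula for a locally Lipschitz (here real-analytic) map $\varphi:\D\to\D$. For every nonnegative Borel function $u$ on $\D$,
\[
\int_{\D} u(z)\,|\varphi'(z)|^2\,dA(z)=\int_{\D}\Big(\sum_{\varphi(z)=\ze}u(z)\Big)dA(\ze),
\]
where the preimages are counted with multiplicity. To prove this directly, I would do three things.

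1. Discard the critical set $\{\varphi'=0\}$. Assuming $\varphi$ is nonconstant, this set is discrete, so it has measure zero. Its image is countable, so it is also null.
2. Cover the rest of $\D$ by countably many disjoint Borel pieces $E_k$ on which $\varphi$ is injective. This is possible by the inverse function theorem.
3. Apply the ordinary injective change of variables on each $E_k$, then sum using monotone convergence.

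At points $\ze$ not in the critical image, multiplicity counting agrees with counting distinct preimages. Points in the critical image form a null set, so they do not affect the integral.

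Apply this formula with $u(z)=|f'(\varphi(z))|^2 g(z,a)^p$, which is nonnegative and Borel. For $\varphi(z)=\ze$ the inner sum becomes $|f'(\ze)|^2\sum_{\varphi(z)=\ze}g(z,a)^p=|f'(\ze)|^2\calN_{p,\varphi,a}(\ze)$. This is exactly the right side of \eqref{20260606eqE2}.

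Two cases need separate treatment.

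\emph{Constant $\varphi$.} Both sides vanish. The left side is zero because $(f\circ\varphi)'\equiv 0$. On the right, $\calN_{p,\varphi,a}$ is supported on a single point, so it is zero almost everywhere; for the identity I interpret $\calN$ as a function defined almost everywhere.

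\emph{The point $z=a$.} Here $g(\cdot,a)=+\infty$. This single point has measure zero, so it is harmless on the left side. On the right, it only affects $\ze=\varphi(a)$, again a null set.

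Since everything is nonnegative, Tonelli's theorem justifies all the interchanges, and the identity holds in $[0,\infty]$. Formula \eqref{20260606E1} is the special case $f'=\one_E$, so it is consistent with this.

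The only real obstacle is measurability: the countable injective decomposition in step 2 and the measurability of $\calN_{p,\varphi,a}$. I would handle this by citing the standard area formula (for example, Federer, or the version in Shapiro's treatment of Nevanlinna counting functions) rather than reproving it.
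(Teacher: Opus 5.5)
Your proposal is correct and follows the same route as the paper: the chain rule $|(f\circ\varphi)'|^2=|f'\circ\varphi|^2|\varphi'|^2$, followed by the non-univalent change of variables (area) formula applied to the nonnegative integrand $|f'(\varphi(z))|^2g(z,a)^p$. The paper performs that second step in a single line without comment, so your treatment of the critical set, multiplicities, the constant case, and the singularity at $z=a$ supplies the standard justification the paper leaves implicit.
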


\begin{proof}
By the chain rule,
\begin{align*}
\textnormal{LHS of \eqref{20260606eqE2}}
&=\int_{\D}|f'(\varphi(z))|^2|\varphi'(z)|^2g^p(z,a) dA(z)  \\
&=\int_{\D}|f'(\ze)|^2 \sum_{\varphi(z)=\ze}g(z,a)^p dA(\ze) \\
&=\int_{\D}|f'(\ze)|^2\calN_{p,\varphi,a}(\ze) dA(\ze).
\end{align*}
The proof is complete.
\end{proof}

\begin{proof}[Proof of Theorem \ref{20260606thm01}]
Assume first that \eqref{20260606eq06} holds for some dyadic system $\calD$ on $\T$.  Let $f\in \calQ_p$.  For each $a\in\D$, apply Theorem \ref{20260606thm03} with $W=\calN_{p,\varphi,a}$ to deduce that 
\[
        \int_{\D}|f'(\ze)|^2\calN_{p,\varphi,a}(\ze) dA(\ze)
        \lesssim \norm{h_{a}^{\varphi}}_{\calB_p(\calD)}
        \norm{f}_{\calQ_p,*}^2.
\]
Taking the supremum in $a$ and using Lemma \ref{20260606lem05}, we obtain
\[
        \norm{C_\varphi f}_{\calQ_p,*}^2
        \lesssim
        \left(\sup_{a \in \D} \norm{h_{a}^{\varphi}}_{\calB_p(\calD)}\right) \cdot 
        \norm{f}_{\calQ_p,*}^2.
\]
This proves boundedness with respect to the \(\calQ_p\)-seminorm. The passage from the seminorm to the full \(\calQ_p\)-norm follows from the standard point-evaluation estimate in \(\calQ_p\).

\vspace{0.1cm}

Conversely, assume $C_\varphi:\calQ_p\to \calQ_p$ is bounded, and let its seminorm operator norm be $L$.  Fix $a\in\D$.  Lemma \ref{20260606lem05} gives, for every $f\in \calQ_p$,
\[
        \int_{\D}|f'(\ze)|^2\calN_{p,\varphi,a}(\ze) dA(\ze)
        =\int_{\D}|(f\circ\varphi)'(z)|^2g(z,a)^pdA(z)
        \le L^2\norm{f}_{\calQ_p,*}^2.
\]
Theorem \ref{20260606thm03} therefore gives for any dyadic system $\calD$ on $\T$, 
\[
        \norm{h_{a}^{\varphi}}_{\calB_p(\calD)}\lesssim L^2.
\]
Taking the supremum in $a$ proves \eqref{20260606eq06}.  Finally, \eqref{20260606eq07} follows clearly from the above argument. The proof is complete.
\end{proof}

\medskip 

\section{Compactness of composition operators} \label{Sec05}

In this section we prove the compactness part of our main result.  We first
record the compact version of Theorem~\ref{20260606thm03}, and then adapt its
argument to give a proof for Theorem~\ref{20260606thm02}.

\subsection{Compact trace embeddings}

We shall use the following standard compactness criterion, whose proof is standard and therefore omitted. 

\begin{lem}\label{20260606lem06} 
Let \(X\subset H(\D)\) be a Banach space of analytic functions such that the inclusion 
\[ X\hookrightarrow H(\D) \] 
is continuous, where \(H(\D)\) is equipped with the compact--open topology. 

Let \(T:\calQ_p\to X\) be a bounded linear operator. Then \(T\) is compact if and only if \[ \|Tf_n\|_X\to0 \] for every bounded sequence \(\{f_n\}\subset\calQ_p\) such that \(f_n\to0\) uniformly on compact subsets of \(\D\).
\end{lem}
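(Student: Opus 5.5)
The \emph{sufficiency} direction (the sequential condition implies compactness) will go through a Montel--Fatou argument. I also plan to show that the \emph{necessity} direction cannot be proved as worded, for an arbitrary bounded $T$: I will give a counterexample and then isolate the extra hypothesis under which it does hold.

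\textbf{Sufficiency.} Let $\{f_n\}$ be bounded in $\calQ_p$, say $\norm{f_n}_{\calQ_p}\le 1$.

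First I need local boundedness. Since $\calQ_p\subseteq\calB$ continuously, the Bloch growth estimate gives
\[
|f(z)|\lesssim \norm{f}_{\calQ_p}\log\frac{2}{1-|z|}.
\]
So $\{f_n\}$ is a normal family. By Montel's theorem some subsequence $f_{n_k}$ converges to some $f\in H(\D)$, uniformly on compact subsets of $\D$.

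Next I show $f\in\calQ_p$. For each fixed $a\in\D$ and $r<1$, Weierstrass convergence of derivatives gives
\[
\int_{|z|<r}|f'|^2g(z,a)^p\,dA=\lim_k\int_{|z|<r}|f_{n_k}'|^2g(z,a)^p\,dA\le \sup_k\norm{f_{n_k}}_{\calQ_p,*}^2 .
\]
Letting $r\to1$ and then taking the supremum over $a$ yields $f\in\calQ_p$ with $\norm{f}_{\calQ_p}\lesssim 1$.

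Then $\{f_{n_k}-f\}$ is bounded in $\calQ_p$ and tends to $0$ locally uniformly. The hypothesis gives $\|Tf_{n_k}-Tf\|_X\to0$, so $T$ maps bounded sets to relatively compact sets, that is, $T$ is compact.

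\textbf{Necessity, and why it fails in general.} The natural attempt runs as follows. If $T$ is compact and $\{f_n\}$ is as in the statement, any subsequence of $\{Tf_n\}$ has a further subsequence converging in $X$ to some $g$, and one wants $g=0$. This requires $f_n\to0$ weakly in $\calQ_p$, or at least $Tf_n\to0$ in $H(\D)$.

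For $0<p\le1$, however, bounded locally uniformly null sequences need not be weakly null. Take $g_0(z)=\log\frac1{1-z}\in\calQ_p$, which lies outside the closure $\calQ_{p,0}$ of the polynomials, and put $f_n:=g_0-g_0(r_n\,\cdot)$ with $r_n\uparrow1$. Then:
\begin{enumerate}
\item[$\bullet$] $\{f_n\}$ is bounded in $\calQ_p$, since dilations are uniformly bounded on $\calQ_p$;
\item[$\bullet$] $f_n\to0$ locally uniformly;
\item[$\bullet$] $g_0(r_n\,\cdot)\in\calQ_{p,0}$.
\end{enumerate}
By Hahn--Banach there is $\Lambda\in\calQ_p^*$ with $\Lambda|_{\calQ_{p,0}}=0$ and $\Lambda(g_0)=1$. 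Take $X$ to be the constants in $H(\D)$, whose inclusion is continuous, and $T=\Lambda$. Then $T$ is rank one, hence compact, yet $Tf_n=1$ for every $n$.

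So the ``only if'' part of the lemma as worded is false. The main obstacle is therefore not technical but structural.

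\textbf{Corrected necessity.} I would prove necessity under the added hypothesis that $T$ is continuous from bounded subsets of $\calQ_p$, with the compact-open topology, into $H(\D)$. This holds for $T=\frac{d}{dz}$, for $T=C_\varphi$, and for the embeddings used later. Under this hypothesis, $Tf_n\to0$ in $H(\D)$. Any $X$-limit $g$ of a subsequence then equals the $H(\D)$-limit by continuity of $X\hookrightarrow H(\D)$, so $g=0$. Since every subsequence has a further subsequence tending to $0$ in $X$, we get $\|Tf_n\|_X\to0$.
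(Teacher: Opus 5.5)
Your proposal is correct. The paper omits the proof as ``standard,'' so there is no competing route to compare against; your analysis shows that the statement itself needs repair.

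\textbf{Sufficiency.} Your argument (normality from the Bloch growth bound, a Fatou-type limit placing the Montel limit $f$ in $\calQ_p$, then the hypothesis applied to $f_{n_k}-f$) is the standard one. Note that it never uses $X\subset H(\D)$, so it works for any Banach target.

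\textbf{Necessity.} Your counterexample to the ``only if'' direction is valid, and its nontrivial points check out.
\begin{enumerate}
\item[(a)] Dilations are contractive: $f(r\,\cdot)$ is a Poisson average of the rotations $f(e^{it}\,\cdot)$, and the $\calQ_p$ seminorm is rotation invariant and convex.
\item[(b)] $g_0=\log\frac{1}{1-z}$ has positive distance from the closure of the polynomials. For a polynomial $P$ one has $|I|^{-p}\int_{Q_I}|P'|^2(1-|z|^2)^p\,dA=O(|I|^2)$, whereas for $g_0$ this ratio is $\gtrsim 1$ on arcs centered at $1$. You only need this easy inclusion into the little-oh class, not the identification of the closure with $\calQ_{p,0}$.
\item[(c)] $g_0(r_n\,\cdot)$ lies in that closure, because $\|h\|_{\calQ_p,*}\lesssim\|h'\|_\infty$ (as $\sup_a\int_{\D}g(z,a)^p\,dA(z)<\infty$).
\end{enumerate}
The failure persists even with $X=\calQ_p$ and $T=\Lambda(\cdot)\,1$. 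So the lemma as worded is false for general bounded $T$. The missing hypothesis is exactly your continuity of $T$ on bounded sets in the compact-open topology, as in Tjani's version of this criterion.

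\textbf{Effect on the paper.} For $C_\varphi:\calQ_p\to\calQ_p$ in Theorem~\ref{20260606thm02}, your corrected necessity applies verbatim. For $\frac{d}{dz}:\calQ_p\to L^2(W\,dA)$ in Theorem~\ref{20260606prop01}, the target is not a subspace of $H(\D)$, so the lemma as stated does not even formally apply there. Your closing step ``the $X$-limit equals the $H(\D)$-limit'' must be replaced by the following: a subsequence converging in $L^2(W\,dA)$ has a further subsequence converging $W\,dA$-a.e., while $f_n'\to0$ pointwise, so the limit is $0$. With that adjustment, both directions hold for every operator the paper actually uses with this lemma, including the Littlewood--Paley reduction in Section~\ref{Sec06}.
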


For a nonnegative locally integrable weight \(W\) on \(\D\) and \(0<\rho<1\),
set
\[
        W^\rho(z):=\one_{\{|z|>\rho\}}W(z).
\]
For any $I \subseteq \T$, define
\[
h_{W^\rho}(I):=\frac{1}{|I|^{p+2}}
\int_{Q_I^{\textnormal{up}}}W^\rho(z)\,dA(z).
\]

We have the following compact version of Theorem \ref{20260606thm03}.

\begin{thm}\label{20260606prop01}
Let \(0<p\le1\), and let \(W\ge0\) be locally integrable on \(\D\).  Then the mapping
\[
       \frac{d}{dz}: \calQ_p\longrightarrow L^2(WdA), \qquad f \mapsto f'
\]
is compact if and only if 
\begin{equation}\label{20260606eq39}
\lim_{\rho\to1^-} \sup_{\calD} \big\|h_{W^\rho}\big\|_{\calB_p(\calD)}=0,
\end{equation}
where the supremum is taken over all dyadic systems \(\calD\) on \(\T\).
\end{thm}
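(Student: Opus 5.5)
Both directions rest on Lemma~\ref{20260606lem06} with $X$ replaced by $L^2(W\,dA)$ via $T f = f'$. The inclusion requirement there is only used to connect weak/compact-open convergence with norm convergence, so I will argue directly. I split the weight as $W = W\one_{\{|z|\le\rho\}} + W^\rho$ and use Theorem~\ref{20260606thm03}, with its constants uniform in $\calD$, for each piece. A preliminary remark: when $h_{W^\rho}$ is finite for some $\rho$, the near part $W\one_{\{|z|\le\rho\}}$ is a locally integrable weight supported in a compact disc. Hence $\int_{|z|\le\rho}|f'|^2W\,dA \lesssim \sup_{|z|\le\rho}|f'(z)|^2\int_{|z|\le \rho}W\,dA$, and the near embedding is bounded by Cauchy estimates. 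Local integrability gives $\int_{|z|\le\rho}W<\infty$, and the sup is controlled on a slightly larger disc.

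\textbf{Sufficiency.} Assume \eqref{20260606eq39}. Let $\{f_n\}$ be bounded in $\calQ_p$ with $f_n\to0$ uniformly on compact sets. Fix $\epsilon>0$ and choose $\rho$ with $\sup_\calD\|h_{W^\rho}\|_{\calB_p(\calD)}<\epsilon$. For the tail, the sufficiency half of Theorem~\ref{20260606thm03} applied to $W^\rho$ gives
\[
\int_\D |f_n'|^2W^\rho\,dA\lesssim \epsilon \sup_n\|f_n\|^2_{\calQ_p,*}.
\]
For the near part, $f_n'\to0$ uniformly on $\{|z|\le\rho\}$ by Weierstrass, so $\int_{|z|\le\rho}|f_n'|^2W\,dA\to0$. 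Hence $\limsup_n\|f_n'\|_{L^2(W)}^2\lesssim\epsilon$. Compactness then follows from the usual argument: every bounded sequence in $\calQ_p$ has a subsequence converging uniformly on compacts by Montel, and $\calQ_p$ is closed under such limits with norm control by Fatou applied to \eqref{20260606eq01}. Applying the above to the differences gives a Cauchy subsequence in $L^2(W\,dA)$.

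\textbf{Necessity.} Suppose the map is compact but \eqref{20260606eq39} fails. Then there exist $\delta>0$, $\rho_k\to1$, and dyadic systems $\calD_k$ with $\|h_{W^{\rho_k}}\|_{\calB_p(\calD_k)}>\delta$. By Lemma~\ref{20260606lem01}, finite packing sequences $\{d_I^{(k)}\}$ witness this, and only intervals with $Q_I^{\textnormal{up}}\cap\{|z|>\rho_k\}\neq\varnothing$ matter, so these have $|I|\lesssim 1-\rho_k$. I then repeat the necessity construction: $G_\eps^{(k)}=\sum \eps_{I,\nu}(d_I^{(k)})^{1/2}b_{I,\nu}$ and $F^{(k)}_\eps=\int_0^z B_pG^{(k)}_\eps$. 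These are uniformly bounded in $\calQ_p$, and the Rademacher averaging gives
\[
\mathbb E_\eps\int |F_\eps^{(k)\prime}|^2W^{\rho_k}\gtrsim \delta.
\]
Hence some choice of signs $F_k$ has $\int|F_k'|^2W^{\rho_k}\,dA\gtrsim\delta$.

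\textbf{Main obstacle.} The main obstacle is to show $F_k\to0$ uniformly on compacts. This is needed so that Lemma~\ref{20260606lem06} forces $\|F_k'\|_{L^2(W)}\to0$, a contradiction since $W\ge W^{\rho_k}$. The estimate I would use is the following. For $|z|\le r$, the kernel satisfies $|1-z\bar w|^{-2-p}\lesssim_r 1$, so
\[
|B_pG^{(k)}_\eps(z)|\lesssim\int_{|w|>\rho_k'}|G^{(k)}_\eps|\,dA_p .
\]
By Cauchy--Schwarz on each tent, this is at most
\[
\sum_I d_I^{1/2}|I|^{(p+2)/2}\le\Big(\sum_I d_I |I|^{p}\Big)^{1/2}\Big(\sum_{I}|I|^{2}\Big)^{1/2}
\]
over $|I|\lesssim1-\rho_k$. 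Here $\sum_I d_I\le 1$ by packing with $J=\T$, and the sum over such $I$ of $|I|^2$ is $\lesssim 1-\rho_k\to0$. So $F_k'\to0$ uniformly on compacts, and hence $F_k\to0$ as well, which completes the contradiction.
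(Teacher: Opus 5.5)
Your proposal is correct and follows essentially the same route as the paper. For sufficiency you split off the interior part and apply Theorem~\ref{20260606thm03} to $W^\rho$; for necessity you use Lemma~\ref{20260606lem01}, the randomized Bergman-projection construction, and the boundary-layer support of the pre-projection functions to get local uniform convergence to zero. The differences are minor: you spell out the Montel/Fatou compactness argument that the paper delegates to Lemma~\ref{20260606lem06}, and your tent-by-tent Cauchy--Schwarz gives the rate $(1-\rho_k)^{1/2}$ instead of the paper's $(1-\rho_n)^{(p+1)/2}$, which serves equally well.
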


\begin{proof}
Assume first that \eqref{20260606eq39} holds. Let \(\{f_n\}\subseteq\calQ_p\) be a bounded sequence such that \(f_n\to0\) uniformly on compact subsets of \(\D\). By Cauchy's integral formula, we also have \(f_n'\to0\) uniformly on compact subsets of \(\D\).

Fix \(\epsilon>0\). Choose \(\rho<1\) sufficiently close to \(1\) so that
\[
\sup_{\calD}\big\|h_{W^\rho}\big\|_{\calB_p(\calD)}
        <\epsilon .
\]
By Theorem \ref{20260606thm03} applied to \(W^\rho\), we obtain for any $n \ge 1$
\[
\int_{\{|z|>\rho\}} |f_n'(z)|^2 W(z)\,dA(z)
=\int_{\D}|f_n'(z)|^2W^\rho(z)\,dA(z)
        \lesssim
        \varepsilon\,\sup_{n \ge 1}\|f_n\|_{\calQ_p,*}^2 \lesssim \epsilon. 
\]
On the other hand, since \(W\) is locally integrable and \(f_n'\to0\)
uniformly on compact subsets,
\[
        \int_{\{|z|\le\rho\}} |f_n'(z)|^2 W(z)\,dA(z)\to0 \qquad \textrm{as} \ n \to \infty.
\]
It follows that
\[
        \int_{\D}|f_n'(z)|^2 W(z)\,dA(z)\to0,
\]
and hence the $\frac{d}{dz}: \calQ_p \to L^2(WdA)$ is compact.

\vspace{0.1cm}

Conversely, assume that $\frac{d}{dz}: \calQ_p \to L^2(WdA)$ is compact and assume that
\eqref{20260606eq39} fails.  Then there exist \(\rho_n\to1\), dyadic systems
\(\calD_n\), and a constant \(c_0>0\) such that
$$
\big\|h_{W^{\rho_n}}\big\|_{\calB_p(\calD_n)}
\ge c_0 .
$$
By Lemma~\ref{20260606lem01}, this means that for each \(n \ge 1\) there exists a finitely supported nonnegative sequence
\(\{d_I^{(n)}\}_{I\in\calD_n}\) such that
\begin{equation}\label{20260606eqE6}
\sup_{J\in\calD_n} \frac1{|J|^p} \sum_{I\subseteq J}d_I^{(n)}\le 1
\end{equation} 
and
\begin{equation}\label{20260606eq41}
       \sum_{I \in \calD_n} d^{(n)}_I h_{W^{\rho_n}}(I)= \sum_{I\in\calD_n} \frac{d_I^{(n)}}{|I|^{p+2}}
        \int_{Q_I^{\textnormal{up}}} W^{\rho_n}(z)\,dA(z)
        \ge \frac{c_0}{2}.
\end{equation}
Moreover, we may assume that  $d_I^{(n)}=0$
    whenever $Q_I^{\textnormal{up}}\cap\{|z|>\rho_n\}=\emptyset$, 
since removing such terms does not change the left-hand side of
\eqref{20260606eq41} and preserves the dyadic \(p\)-Carleson packing condition \eqref{20260606eqE6}. 

We now repeat the randomized Bergman-projection construction from the proof of the necessary part of Theorem~\ref{20260606thm03}, with \(\calD=\calD_n\),
\(d_I=d_I^{(n)}\), and \(W=W^{\rho_n}\).  More precisely, let
\(\varepsilon=\{\varepsilon_{I,\nu}\}_{\substack{I \in \calD_n \\ 1 \le \nu \le N_0}}\) be independent Rademacher random
variables and set
\[
        G_{n,\varepsilon}
        :=
        \sum_{I\in\calD_n}\sum_{\nu=1}^{N_0}
        \varepsilon_{I,\nu}\big(d_I^{(n)}\big)^{1/2}b_{I,\nu}.
\]
Define
\[
        F_{n,\varepsilon}(z)
        :=
        \int_0^z B_pG_{n,\varepsilon}(\xi)\,d\xi .
\]
As in the proof of Theorem~\ref{20260606thm03}, the dyadic packing condition \eqref{20260606eqE6} and Lemma \ref{20260606lem04} imply
\[
        \|F_{n,\varepsilon}\|_{\calQ_p,*}\lesssim1
\]
uniformly in \(n\) and in the choice of signs.  Moreover, after taking
expectation in the signs, the same argument gives
\[
        \mathbb E_\varepsilon
        \int_{\D}|F_{n,\varepsilon}'(z)|^2W^{\rho_n}(z)\,dA(z)
        \gtrsim
        \sum_{I\in\calD_n}
        d_I^{(n)}h_{W^{\rho_n}}(I).
\]
Combining this with \eqref{20260606eq41}, we obtain
\[
        \mathbb E_\varepsilon
        \int_{\D}|F_{n,\varepsilon}'(z)|^2W^{\rho_n}(z)\,dA(z)
        \gtrsim1.
\]
Hence, for each \(n\), there exists a choice of signs
\(\varepsilon^{(n)}\) such that
\[
        \int_{\D}|F_{n,\varepsilon^{(n)}}'(z)|^2W^{\rho_n}(z)\,dA(z)
        \gtrsim1.
\]
We now set
\[
        F_n:=F_{n,\varepsilon^{(n)}} .
\]
Then \(F_n(0)=0\), and
\begin{equation}\label{20260606eq42}
        \|F_n\|_{\calQ_p,*}\lesssim1,
        \qquad
        \int_{\D}|F_n'(z)|^2W^{\rho_n}(z)\,dA(z)\gtrsim1 .
\end{equation}

It remains to check that \(F_n\to0\) uniformly on compact subsets of \(\D\).  Let \(G_n:=G_{n, \varepsilon^{(n)}}\)
be the non-analytic function used before applying the Bergman projection, so
that
\[
        F_n'=B_pG_n .
\]
If \(Q_I^{\textnormal{up}}\cap\{|z|>\rho_n\}\ne\emptyset\), then
\begin{equation} \label{20260606G60}
        Q_I^{\textnormal{up}}
        \subset
        \{|z|>1-C_0(1-\rho_n)\}
\end{equation} 
for an absolute constant \(C_0 \ge 1\).  Therefore, since we assume that $d_I^{(n)}=0$ whenever $Q_I^{\textnormal{up}} \cap \{|z|>\rho_n \}=\emptyset$, 
\begin{equation} \label{20260606G61}
        \supp \; G_n
        \subset
        \{|z|>1-C_0(1-\rho_n)\},
\end{equation}
Moreover, the dyadic packing condition \eqref{20260606eqE6} and Lemma \ref{20260606lem04} gives
\[
        \|G_n\|_{\calT_p}\lesssim1,
\]
and in particular
\[
        \int_{\D}|G_n(w)|^2\,dA_p(w)\lesssim1.
\]

Let \(K\subseteq\D\) be any compact subset.  For \(z\in K\), the Bergman kernel
\((1-z\overline w)^{-2-p}\) is uniformly bounded.  Hence, by Cauchy--Schwarz,
\begin{align*}
        |F_n'(z)|
        =
        |B_pG_n(z)|
        &\lesssim_K
        \int_{\{|w|>1-C_0(1-\rho_n)\}} |G_n(w)|\,dA_p(w)  \\
        &\le
        \left(\int_{\D}|G_n(w)|^2\,dA_p(w)\right)^{1/2}
        A_p\big(\{|w|>1-C_0(1-\rho_n)\}\big)^{1/2}  \\
        &\lesssim
        (1-\rho_n)^{(p+1)/2}\to0 .
\end{align*}
Thus \(F_n'\to0\) uniformly on compact subsets.  Since \(F_n(0)=0\), it also
follows that \(F_n\to0\) uniformly on compact subsets of \(\D\).

However, by Lemma \ref{20260606lem06}, this contradicts the compactness of the mapping $\frac{d}{dz}: \calQ_p \to L^2(WdA)$, because
\eqref{20260606eq42} gives
\[
        \int_{\D}|F_n'(z)|^2W(z)\,dA(z)
        \ge
        \int_{\D}|F_n'(z)|^2W^{\rho_n}(z)\,dA(z)
        \gtrsim1.
\]
The proof is complete.
\end{proof}

\subsection{Compactness of composition operators acting on $\calQ_p$}

We now prove the compactness criterion for \(C_\varphi\) acting on \(\calQ_p\).
Because of the additional supremum over \(a\in\D\) in \eqref{20260606eq09}
below, Theorem~\ref{20260606thm02} does not follow by a direct application of
Theorem~\ref{20260606prop01}. Instead, we shall adapt the proof strategy of
Theorem~\ref{20260606prop01} and carry out the estimates uniformly in
\(a\in\D\).

We turn to some details. For \(0<\rho<1\), \(a\in\D\), and every arc
\(I\subseteq\T\), recall that 
\[
        H_a^{\varphi,\rho}(I)
        :=
        \frac{1}{|I|^{p+2}}
        \int_{Q_I^{\textnormal{up}}\cap\{\rho<|\zeta|<1\}}
        \calN_{p,\varphi,a}(\zeta)\,dA(\zeta).
\]

\begin{proof}[Proof of Theorem~\ref{20260606thm02}]
Assume first that \(C_\varphi\) is bounded on \(\calQ_p\) and that
\begin{equation}\label{20260606eq09}
        \lim_{\rho\to1^-}
        \sup_{a\in\D}
        \sup_{\calD}
        \big\|H_a^{\varphi,\rho}\big\|_{\calB_p(\calD)}
        =0,
\end{equation}
where the second supremum is taken over all dyadic systems \(\calD\) on \(\T\).
We prove that \(C_\varphi\) is compact. By Lemma~\ref{20260606lem06}, it is
enough to consider a bounded sequence \(\{f_n\}\subset \calQ_p\) such that
$f_n \to 0$ uniformly on compact subsets of $\D$, and therefore \(f_n'\to 0\) uniformly on compact subsets of \(\D\).

By the change-of-variables formula for \(\calN_{p,\varphi,a}\),
\begin{equation}\label{20260606eq43}
        \|C_\varphi f_n\|_{\calQ_p,*}^2
        =
        \sup_{a\in\D}
        \int_{\D}|f_n'(\zeta)|^2
        \calN_{p,\varphi,a}(\zeta)\,dA(\zeta).
\end{equation}
Let \(\epsilon>0\). Choose \(\rho<1\) sufficiently close to \(1\) so that
\[
        \sup_{a\in\D}
        \sup_{\calD}
        \big\|H_a^{\varphi,\rho}\big\|_{\calB_p(\calD)}
        <\epsilon.
\]
Applying Theorem \ref{20260606thm03} with
\[
        W(\zeta)
        =
        \one_{\{|\zeta|>\rho\}}
        \calN_{p,\varphi,a}(\zeta),
\]
we obtain
\begin{equation}\label{20260606eq44}
        \int_{\{|\zeta|>\rho\}} |f_n'(\zeta)|^2
        \calN_{p,\varphi,a}(\zeta)\,dA(\zeta)
        \lesssim
        \epsilon\,\sup_n\|f_n\|_{\calQ_p,*}^2 \lesssim \epsilon,
        \qquad a\in\D .
\end{equation}

For the interior part, boundedness of \(C_\varphi\) and the test function
\(f(\zeta)=\zeta\) imply
\begin{equation}\label{20260606eq45}
        \sup_{a\in\D}
        \int_{\D}\calN_{p,\varphi,a}(\zeta)\,dA(\zeta)
        <\infty .
\end{equation}
Therefore,
\[
        \sup_{a\in\D}
        \int_{\{|\zeta|\le\rho\}} |f_n'(\zeta)|^2
        \calN_{p,\varphi,a}(\zeta)\,dA(\zeta)
        \le
        \left(\sup_{|\zeta|\le\rho}|f_n'(\zeta)|^2 \right) \cdot 
        \sup_{a\in\D}
        \int_{\D}\calN_{p,\varphi,a}(\zeta)\,dA(\zeta)
        \to0.
\]
Combining this with \eqref{20260606eq43} and \eqref{20260606eq44}, and then
letting \(\epsilon\to0\), gives
\[
        \|C_\varphi f_n\|_{\calQ_p,*}\to0.
\]
The value at the origin also tends to zero, since
\[
        C_\varphi f_n(0)=f_n(\varphi(0))\to0 \qquad \textrm{as} \quad n \to \infty.
\]
Hence \(\|C_\varphi f_n\|_{\calQ_p}\to0\), and by Lemma~\ref{20260606lem06}, \(C_\varphi\) is compact.

\medskip 

Conversely, assume that \(C_\varphi\) is compact. Then \(C_\varphi\) is
bounded. Suppose that \eqref{20260606eq09} fails. Then there exist
\(\rho_n\to1\), points \(a_n\in\D\), dyadic systems \(\calD_n\), and a constant
\(c_0>0\) such that
\begin{equation}\label{20260606eq46}
        \big\|H_{a_n}^{\varphi,\rho_n}\big\|_{\calB_p(\calD_n)}
        \ge c_0 .
\end{equation}
By Lemma~\ref{20260606lem01}, for each \(n\) there exists a finitely supported
nonnegative sequence \(\{d_I^{(n)}\}_{I\in\calD_n}\) satisfying
\[
        \sup_{J\in\calD_n}
        \frac1{|J|^p}
        \sum_{I\subseteq J}d_I^{(n)}
        \le1
\]
and
\begin{equation}\label{20260606eq47}
        \sum_{I\in\calD_n}
        \frac{d_I^{(n)}}{|I|^{p+2}}
        \int_{Q_I^{\textnormal{up}}\cap\{|\zeta|>\rho_n\}}
        \calN_{p,\varphi,a_n}(\zeta)\,dA(\zeta)
        \ge \frac{c_0}{2}.
\end{equation}
As in the proof of Theorem~\ref{20260606prop01}, we may assume that
\(d_I^{(n)}=0\) unless $Q_I^{\textnormal{up}}\cap\{|\zeta|>\rho_n\}\ne\emptyset$.

Repeating the randomized Bergman-projection realization from the proof of
Theorem~\ref{20260606thm03}, with the weight
\[
        \one_{\{|\zeta|>\rho_n\}}
        \calN_{p,\varphi,a_n}(\zeta),
\]
gives functions \(f_n\in\calQ_p\), normalized by \(f_n(0)=0\), such that
\begin{equation}\label{20260606eq48}
        \|f_n\|_{\calQ_p,*}\lesssim1,
        \qquad
        \int_{\{|\zeta|>\rho_n\}}
        |f_n'(\zeta)|^2
        \calN_{p,\varphi,a_n}(\zeta)\,dA(\zeta)
        \gtrsim1 .
\end{equation}
The constants are independent of \(n\), \(a_n\), the dyadic system \(\calD_n\),
and the weight.

Moreover, the same support argument as in Theorem~\ref{20260606prop01} (see the argument in \eqref{20260606G60} and \eqref{20260606G61})
shows that the functions before applying the Bergman projection are supported in the boundary layer
\[
        \{|\zeta|>1-C_0(1-\rho_n)\}.
\]
Consequently,
\[
        f_n\to0
        \quad\text{locally uniformly on }\D .
\]
Since \(C_\varphi\) is compact, Lemma~\ref{20260606lem06} gives
\[
        \|C_\varphi f_n\|_{\calQ_p}\to0.
\]
On the other hand, by the change-of-variables formula,
\[
        \|C_\varphi f_n\|_{\calQ_p,*}^2
        =
        \sup_{a\in\D}
        \int_{\D}|f_n'(\zeta)|^2
        \calN_{p,\varphi,a}(\zeta)\,dA(\zeta)
        \ge
        \int_{\{|\zeta|>\rho_n\}}
        |f_n'(\zeta)|^2
        \calN_{p,\varphi,a_n}(\zeta)\,dA(\zeta)
        \gtrsim1,
\]
which is a contradiction. Therefore \eqref{20260606eq09} holds, and the proof
is complete.
\end{proof}

\bigskip 
\section{Applications to the $\calQ_p$-Carleson measure problem} \label{Sec06}

As some further applications of our main results, Theorem~\ref{20260606thm03} and its compact counterpart Theorem~\ref{20260606prop01}, we return to the $\calQ_p$-Carleson measure problem discussed in Remark~\ref{20260611rem01}.

As explained there, the full $\calQ_p$-Carleson measure problem is not a direct consequence of the dyadic trace theorem. Nevertheless, for a large class of weights for which a Littlewood--Paley type derivative norm equivalence is available, the problem can be reduced to our current setting. In this section, we use this reduction to obtain boundedness and compactness criteria for the corresponding $\calQ_p$-Carleson measure problems.

We begin with the some definitions.

\begin{defn}
Let $W\geq 0$ be a weight on $\D$. We say that $W$ belongs to the
\emph{Littlewood--Paley class}, and write $W\in\mathcal{LP}$, if for every
$n\in\N$ and every $0<q<\infty$ one has
$$
        \int_{\D}|f(z)|^q W(z)\,dA(z)
        \simeq
        \int_{\D}|f^{(n)}(z)|^q(1-|z|)^{np}W(z)\,dA(z)
        +
        \sum_{j=0}^{n-1}|f^{(j)}(0)|^q
$$
for all $f\in H(\D)$, where the implicit constants may depend on $W,n$ and $q$, but not on $f$.
\end{defn}

The class $\mathcal{LP}$ has been studied extensively in the theory of weighted spaces of analytic
functions. We record below two large classes of examples.

\begin{exa}
\begin{enumerate}
\item [$\bullet$] {\bf Radial weights.} Let $W$ be a radial weight on $\D$, identified with a weight on $[0,1)$, and write 
$$ 
\widehat W(r):=\int_r^1 W(t)\,dt,\qquad 0\le r<1. 
$$ 
We say that 

\vspace{0.1cm}

\begin{enumerate} 
\item[$\bullet$] $W\in\widehat{\mathcal{RD}}$ if there exists $C\geq 1$ such that 
$$ \widehat W(r) \leq C\widehat W\left(\frac{1+r}{2}\right), \qquad 0\le r<1; 
$$ 

\vspace{-0.3cm}
\item[$\bullet$] $W\in\widecheck{\mathcal{RD}}$ if there exist constants $C,K>1$ such that 
$$ 
\widehat W(r) \geq C\widehat W\left(1-\frac{1-r}{K}\right), \qquad 0\le r<1. 
$$ 
\end{enumerate} 
Finally, set 
\vspace{-0.3cm}
$$ 
\mathcal{RD} :=\widehat{\mathcal{RD}}\cap\widecheck{\mathcal{RD}}. 
$$ 
Then,
$$ 
W\in\mathcal{LP} \quad\Longleftrightarrow\quad W\in\mathcal{RD}; 
$$ 
see \cite{PelaezRattya2021,PelaezRosa2026}. Here, as usual, a radial weight
$W$ on $[0,1)$ is identified with the weight on $\D$ given by
$W(z):=W(|z|), \; z\in\D$. 

\medskip 

\item[$\bullet$] \textbf{The generalized Bekoll\'e--Bonami class associated with
$\mathcal{RD}$.}  Let
$\nu\in\mathcal{RD}$ and $1<s<\infty$. A $\nu$--weight $u$ is said to belong
to $B_s(\nu)$ if
$$
\sup_{I\subseteq\T}
\frac{
\left(\displaystyle\int_{Q_I} u(z)\nu(z)dA(z)\right)^{1/s}
\left(\displaystyle\int_{Q_I} u(z)^{-s'/s}\nu(z)dA(z)\right)^{1/s'}
}
{\displaystyle\int_{Q_I}\nu(z)dA(z)}
<\infty,
$$
where $s'$ is the conjugate exponent of $s$. Define
$$
B_\infty(\nu)
:=
\bigcup_{1<s<\infty}
\left\{
W:\frac{W}{\nu}\in B_s(\nu)
\right\},
$$
and
$$
B_\infty(\mathcal{RD})
:=
\bigcup_{\nu\in\mathcal{RD}}B_\infty(\nu).
$$
Then
$$
B_\infty(\mathcal{RD})\subset \mathcal{LP};
$$
see \cite{PelaezRattya2024}. In particular, weights in $B_\infty(\mathcal{RD})$ need not be radial. This
class contains the usual Bekoll\'e--Bonami class $B_\infty$ as a special case,
corresponding to the choice of the reference weight $\nu\equiv 1$. Moreover,
the radial weights in $B_\infty(\mathcal{RD})$ are precisely the weights in
$\mathcal{RD}$.
\end{enumerate}
\end{exa}

As direct consequences of Theorem~\ref{20260606thm03} and its compact counterpart Theorem~\ref{20260606prop01}, we resolve the $\calQ_p$-Carleson measure problem for weights belonging to the Littlewood--Paley class.

For a weight $W$ on $\D$, we write
$$
        \widetilde W(z):=(1-|z|)^2W(z),\qquad z\in\D.
$$

\begin{thm}\label{20260611thm01}
Let $0<p\leq 1$, and let $W\in\mathcal{LP}$. For every arc
$I\subseteq\T$, define
$$
        h_{\widetilde W}(I)
        :=
        \frac{1}{|I|^{p+2}}
        \int_{Q_I^{\textnormal{up}}}\widetilde W(z)\,dA(z).
$$
Then the following statements are equivalent.
\begin{enumerate}
\item[(i)] The identity embedding
$$
        \mathrm{id}:\calQ_p\longrightarrow L^2(W dA)
$$
is bounded; that is, there exists a constant $C>0$ such that
$$
        \int_{\D}|F(z)|^2W(z)\,dA(z)
        \leq C\|F\|_{\calQ_p}^2,
        \qquad F\in\calQ_p.
$$

\item[(ii)] There exists a dyadic system $\calD$ on $\T$ such that
$$
        \|h_{\widetilde W}\|_{\calB_p(\calD)}<\infty,
$$
where, for each fixed $\calD$, the sequence is understood as
$\{h_{\widetilde W}(I)\}_{I\in\calD}$.
\end{enumerate}
Moreover,
$$
        \|\mathrm{id}\|_{\calQ_p\to L^2(W dA)}^2
        \simeq
        W(\D)
        +
        \inf_{\calD}\|h_{\widetilde W}\|_{\calB_p(\calD)}
        \simeq
        W(\D)
        +
        \sup_{\calD}\|h_{\widetilde W}\|_{\calB_p(\calD)},
$$
where the infimum and supremum are taken over all dyadic systems $\calD$
on $\T$.
\end{thm}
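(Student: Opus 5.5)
The plan is to reduce Theorem~\ref{20260611thm01} to Theorem~\ref{20260606thm03} applied to the weight $\widetilde W$, using the Littlewood--Paley property of $W$ with $n=1$ and $q=2$. I read the weight $(1-|z|)^{np}$ in the definition of $\mathcal{LP}$ as $(1-|z|)^{nq}$. This is the reading under which $\widetilde W=(1-|z|)^2W$ is the relevant modified weight, and it agrees with the radial $\mathcal{RD}$ examples cited. Under this reading, the $\mathcal{LP}$ property gives, for every $f\in H(\D)$,
\[
\int_{\D}|f(z)|^2W(z)\,dA(z)\simeq \int_{\D}|f'(z)|^2\widetilde W(z)\,dA(z)+|f(0)|^2 .
\]
To keep the constant term under control, I will apply this only to $f=F-F(0)$, which vanishes at the origin. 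For those functions it reads
\begin{equation*}
\int_{\D}|F-F(0)|^2W\,dA\simeq\int_{\D}|F'|^2\widetilde W\,dA .
\end{equation*}
All implicit constants here depend on $W$, which is allowed.

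\emph{(ii)$\Rightarrow$(i).} Assume $\|h_{\widetilde W}\|_{\calB_p(\calD)}<\infty$ for some $\calD$. I would first check that $W(\D)<\infty$. Applying the $\mathcal{LP}$ equivalence to $f\equiv 1$ gives $W(\D)\simeq 1$, so this should be automatic; in any case $W(\D)$ appears as a separate term in the norm formula. Then, for $F\in\calQ_p$,
\[
\int_{\D}|F|^2W\,dA\le 2\int_{\D}|F-F(0)|^2W\,dA+2|F(0)|^2W(\D)\lesssim \int_{\D}|F'|^2\widetilde W\,dA+|F(0)|^2W(\D).
\]
By the sufficiency part of Theorem~\ref{20260606thm03} with weight $\widetilde W$, the first term on the right is $\lesssim \|h_{\widetilde W}\|_{\calB_p(\calD)}\|F\|_{\calQ_p,*}^2$. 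Since $|F(0)|\le\|F\|_{\calQ_p}$, this yields (i) together with the bound
\[
\|\mathrm{id}\|^2\lesssim W(\D)+\inf_{\calD}\|h_{\widetilde W}\|_{\calB_p(\calD)} .
\]

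\emph{(i)$\Rightarrow$(ii).} Testing (i) on $F\equiv1$ gives $W(\D)\le\|\mathrm{id}\|^2$. Next take any $F\in\calQ_p$ with $F(0)=0$, so that $\|F\|_{\calQ_p}=\|F\|_{\calQ_p,*}$. The $\mathcal{LP}$ equivalence then gives
\[
\int_{\D}|F'|^2\widetilde W\,dA\lesssim \int_{\D}|F|^2W\,dA\le\|\mathrm{id}\|^2\|F\|_{\calQ_p,*}^2 .
\]
Since the left side depends only on $F'$, this holds for all $F\in\calQ_p$ after subtracting $F(0)$. Hence \eqref{20260606eq15} holds for $\widetilde W$ with $C\lesssim\|\mathrm{id}\|^2$. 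The necessity part of Theorem~\ref{20260606thm03}, which is valid for every dyadic system, then gives $\sup_{\calD}\|h_{\widetilde W}\|_{\calB_p(\calD)}\lesssim\|\mathrm{id}\|^2$. Combined with the previous paragraph and the trivial inequality $\inf_{\calD}\le\sup_{\calD}$, this closes the chain of two-sided estimates in the norm formula.

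\emph{Main obstacle.} The only delicate point is the bookkeeping of the constant term $|F(0)|^2$ in the $\mathcal{LP}$ equivalence. The equivalence itself carries an $f(0)$ term with a $W$-dependent constant, whereas the theorem isolates the explicit quantity $W(\D)$. The device of applying $\mathcal{LP}$ only to $F-F(0)$, and handling the constant separately through $W(\D)$ and the test function $F\equiv1$, is what I expect to resolve this. A secondary check is local integrability of $\widetilde W$, which Theorem~\ref{20260606thm03} requires; it follows from $W(\D)<\infty$, or more simply from local integrability of $W$.
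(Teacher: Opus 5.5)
Your proposal is correct and takes the same route as the paper: apply the $\mathcal{LP}$ equivalence with $n=1$, $q=2$ to identify boundedness of $\mathrm{id}:\calQ_p\to L^2(W\,dA)$ with boundedness of $\frac{d}{dz}:\calQ_p\to L^2(\widetilde W\,dA)$ plus the constant term $W(\D)<\infty$, and then invoke Theorem~\ref{20260606thm03}. Two points you make explicit are left implicit in the paper's short proof: the weight must be read as $(1-|z|)^{nq}$, the only reading consistent with $\widetilde W=(1-|z|)^2W$, and the $|F(0)|^2$ term is handled by passing to $F-F(0)$ and testing on $F\equiv 1$.
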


\begin{proof}
Since $W\in\mathcal{LP}$, applying the Littlewood--Paley equivalence with
$n=1$ and $q=2$ gives
\begin{equation} \label{20260611eq01}
        \int_{\D}|F(z)|^2W(z)\,dA(z)
        \simeq
        |F(0)|^2
        +
        \int_{\D}|F'(z)|^2\widetilde W(z)\,dA(z),
        \qquad F\in H(\D).
\end{equation}
Thus the boundedness of $\mathrm{id}:\calQ_p\to L^2(W dA)$ is equivalent
to the boundedness of
$$
        \frac{d}{dz}:\calQ_p\longrightarrow L^2(\widetilde W dA),
        \qquad F\mapsto F',
$$
together with the harmless constant term $W(\D)<\infty$, which follows from the assumption that $W\in\mathcal{LP}$. The conclusion therefore follows directly from
Theorem~\ref{20260606thm03}.
\end{proof}

\begin{thm}\label{20260611thm02}
Let $0<p\leq 1$, and let $W\in\mathcal{LP}$. Then the identity embedding
$$
        \mathrm{id}:\calQ_p\longrightarrow L^2(W dA)
$$
is compact if and only if
$$
        \lim_{\rho\to1^-}
        \sup_{\calD}
        \big\|h_{\widetilde W^\rho}\big\|_{\calB_p(\calD)}
        =
        0,
$$
where
$$
        \widetilde W^\rho(z)
        :=
        \widetilde W(z)\mathbf 1_{\{|z|>\rho\}}(z),
$$
and the supremum is taken over all dyadic systems $\calD$ on $\T$.
\end{thm}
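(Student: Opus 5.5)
The plan is to reduce Theorem~\ref{20260611thm02} to the compact trace theorem, Theorem~\ref{20260606prop01}, applied with the weight $\widetilde W$. This is the same reduction used in the proof of Theorem~\ref{20260611thm01}. The tool is the Littlewood--Paley equivalence \eqref{20260611eq01}, which holds since $W\in\mathcal{LP}$:
$$
\int_{\D}|F|^2W\,dA\simeq |F(0)|^2+\int_{\D}|F'|^2\widetilde W\,dA, \qquad F\in H(\D).
$$
This equivalence says $\|F\|_{L^2(WdA)}$ is comparable to the $\mathbb C\oplus L^2(\widetilde W dA)$ norm of the pair $(F(0),F')$. We will test compactness through sequences, using Lemma~\ref{20260606lem06}. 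That lemma is stated for targets in $H(\D)$. For $L^2(WdA)$ and $L^2(\widetilde WdA)$ we use the elementary version: a bounded operator from $\calQ_p$ is compact if and only if it sends bounded, locally uniformly null sequences to norm-null sequences. The direction ``compact $\Rightarrow$ null'' follows because any norm limit of a subsequence of $Tf_n$ must vanish; this in turn holds because pointwise convergence forces the weak limit to be zero on compacta, using local integrability of the weights. The converse direction uses the fact that bounded sets of $\calQ_p$ are normal families.

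\emph{Sufficiency.} Assume $\lim_{\rho\to1^-}\sup_\calD\|h_{\widetilde W^\rho}\|_{\calB_p(\calD)}=0$. Then $\|h_{\widetilde W^{\rho}}\|_{\calB_p(\calD)}<\infty$ for some $\rho$. We must also control the interior part $\widetilde W\one_{\{|z|\le\rho\}}$. Here $W(\D)<\infty$, as used in Theorem~\ref{20260611thm01}, and the sub-mean value estimate \eqref{20260606eqB2} give $\int_{|z|\le\rho}|F'|^2\widetilde W\lesssim_\rho\|F\|_{\calQ_p}^2$. So $d/dz:\calQ_p\to L^2(\widetilde WdA)$ is bounded, and Theorem~\ref{20260606prop01} gives compactness. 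Now take $f_n$ bounded in $\calQ_p$ with $f_n\to0$ locally uniformly. Compactness gives $\int|f_n'|^2\widetilde W\to0$, and clearly $|f_n(0)|\to0$. By \eqref{20260611eq01}, $\int|f_n|^2W\to0$, so $\mathrm{id}$ is compact.

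\emph{Necessity.} Assume $\mathrm{id}:\calQ_p\to L^2(WdA)$ is compact, and suppose the limit condition fails. We follow the proof of Theorem~\ref{20260606prop01} verbatim with $\widetilde W$ in place of $W$. This produces $\rho_n\to1$, dyadic systems $\calD_n$, and functions $F_n$ with $F_n(0)=0$ and $\|F_n\|_{\calQ_p}\lesssim1$. These satisfy $F_n\to0$ locally uniformly and $\int|F_n'|^2\widetilde W\,dA\gtrsim1$. The lower bound in \eqref{20260611eq01} then gives $\int|F_n|^2W\,dA\gtrsim1$, which contradicts compactness via the sequential criterion. That construction only used local integrability of the weight, which holds for $\widetilde W$ because $W$ is locally integrable.

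The main obstacle is confirming that the Littlewood--Paley equivalence is a genuine two-sided equivalence with constants independent of $F$. This matters in two places: the lower bound in the necessity step, and the vanishing of the constant term in the sufficiency step. Both follow directly from the definition of $\mathcal{LP}$ with $n=1$, $q=2$, so the remaining work is routine bookkeeping of the sequential compactness criterion for $L^2$ targets.
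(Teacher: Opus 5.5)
Your proposal is correct and follows the paper's proof. The $n=1$, $q=2$ Littlewood--Paley equivalence \eqref{20260611eq01} makes compactness of $\mathrm{id}:\calQ_p\to L^2(W\,dA)$ equivalent to compactness of $\frac{d}{dz}:\calQ_p\to L^2(\widetilde W\,dA)$, and Theorem~\ref{20260606prop01} then gives the criterion. Your extra steps make explicit what the paper's short argument leaves implicit: the sequential compactness criterion for the non-analytic targets $L^2(W\,dA)$ and $L^2(\widetilde W\,dA)$, the explicit boundedness check in the sufficiency direction, and rerunning the construction rather than citing the theorem in the necessity direction.
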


\begin{proof}
Again, since $W\in\mathcal{LP}$, \eqref{20260611eq01} holds. Hence compactness of the embedding
$\mathrm{id}:\calQ_p\to L^2(W dA)$ is equivalent to compactness of 
$$
        \frac{d}{dz}:\calQ_p\longrightarrow L^2(\widetilde W dA).
$$
The desired criterion follows from Theorem~\ref{20260606prop01}.
\end{proof}

\end{document}